\documentclass[12pt]{article}

\usepackage{amssymb}
\usepackage{amsthm}
\usepackage{amsmath}
\usepackage[retainorgcmds]{IEEEtrantools}
\usepackage[margin=2.5cm]{geometry}
\usepackage{makeidx}
\usepackage{amsfonts}
\usepackage{enumerate}

\usepackage{wrapfig}
\usepackage{here}
\usepackage{graphicx}
\usepackage{tikz}
\usepackage%[dvips,colorlinks,citecolor=red,linkcolor=red]
{hyperref}

\theoremstyle{plain}
\newtheorem{theorem}{Theorem}[section]
\newtheorem{claim}{Claim}

\newtheorem{lemma}[theorem]{Lemma}
\newtheorem{corollary}[theorem]{Corollary}

\newtheorem{proposition}[theorem]{Proposition}

\newtheorem{problem}{Problem}
\theoremstyle{definition} 

\definecolor{blau}{rgb}{0.1,0.0,0.9}
\definecolor{gruen}{cmyk}{1.0,0.2,0.7,0.07}
\definecolor{mag}{cmyk}{0.0,0.9,0.3,0.0}

\numberwithin{equation}{section}

\begin{document}

\date{\today}
\title{Solution of Vizing's Problem on Interchanges
for Graphs with Maximum Degree 4 and Related Results}
%with maximum degree $4$}
\author{{\sl Armen S. Asratian}\thanks{{\it E-mail address:} armen.asratian@liu.se}\\ 
Department of Mathematics \\
Link\"oping University \\ 
SE-581 83 Link\"oping, Sweden
\and
{\sl Carl Johan Casselgren}\thanks{{\it E-mail address:} 
carl.johan.casselgren@liu.se \, Part of the work done
while the author was a postdoc at Syddansk Universitet.
Research supported by SVeFUM.} \\
Department of Mathematics \\
Link\"oping University \\
SE-581 83 Link\"oping, Sweden
}

\maketitle

\bigskip 
\noindent
{\bf Abstract.}  
	Let $G$ be a Class 1 graph with maximum degree $4$
	and let $t\geq 5$  be an integer.
	We show that  any proper $t$-edge coloring of $G$  can be transformed to
	any proper $4$-edge coloring of $G$ using only transformations
	on $2$-colored subgraphs (so-called interchanges).
	This settles the smallest previously 	unsolved case of
	a well-known  problem of Vizing
	on interchanges, posed in 1965. Using our result we give an
	affirmative answer to a question of Mohar
	for two classes of graphs: we show  that
	all proper $5$-edge colorings of a Class 1 graph with
	maximum degree 4 are Kempe equivalent,
	that is, can be transformed to each other by interchanges, and that
 	all proper 7-edge colorings of a
	Class 2 graph with maximum degree 5 are Kempe equivalent. 	\bigskip

\section{Introduction}

  We consider finite graphs without loops and multiple edges.
	A proper $t$-edge coloring  of a graph $G$ is a
	mapping $f:E(G)\longrightarrow  \{1,\dots,t\}$ such that
	$f(e)\not=f(e')$ for every  pair of adjacent edges $e$ and $e'$ in $G$.
	If $e\in E(G)$ and $f(e)=k$ then  
	we say
	that the edge $e$ is {\em colored $k$ under $f$}. 
	The set of edges colored $k$ under $f$
	we denote by $M(f,k)$. 	We denote by $\Delta(G)$ the maximum degree of
	vertices of a graph $G$, and by $d_G(v)$ the degree of a 
	vertex $v$ of $G$. 
	The {\em chromatic index $\chi'(G)$} of a graph $G$
	is the minimum number  $k$ for which there exists  a proper
	$k$-edge coloring of $G$.
	 For a proper $t$-edge coloring $f$ of $G$ and  any two colors
	$a,b \in \{1,\dots,t\}$ we denote by $G_f(a,b)$ the subgraph induced by the set 
	$M(f,a)\cup M(f,b)$.  By switching colors
	$a$ and $b$ on a connected component of $G_f(a,b)$,
	we obtain another proper $t$-edge coloring of $G$.
	This operation is called an {\em interchange} or  a {\em Kempe change}.
	 Two edge colorings $f,g$
	are {\em  Kempe equivalent} if $g$ can be obtained
	from $f$ by a sequence of interchanges.
	
	Interchanges play a key role in investigations on edge colorings.
	The proofs of many results in this area are based on transformations of one
	proper edge coloring of a graph $G$ to another using interchanges.
	 For example, Vizing's  theorem on the 
	chromatic index \cite{Vizing1} can be reformulated,
	taking into consideration the proof of  it,  in the following way.\medskip

\noindent {\bf Theorem A}.  For every  graph $G$,
$\chi'(G)\le \Delta(G)+1$. Moreover every proper $t$--edge coloring of $G$,  
where $t\geq
\Delta(G)+2$, can be transformed to a proper $(\Delta(G)+1)$--edge coloring  
of $G$ by
a sequence of interchanges.\bigskip

Vizing's result implies that for any graph $G$, $\chi'(G) = \Delta(G)$
	or $\chi'(G) = \Delta(G) +1$. In the former case
	$G$ is said to be \emph{Class $1$}, and
	in the latter $G$ is {\em Class $2$}.
For a  Class 2 graph $G$ 
 the result of Vizing means that  any
proper edge coloring of $G$ can be transformed to a proper  
$\chi'(G)$-edge coloring
by using interchanges only. 
The following problem posed by Vizing \cite{Vizing2, Vizing3}
	(see also \cite{StiebitzToft, JensenToft}) for Class 1 graphs is still open:

\begin{problem}
\label{prob:vizing}
	Is it true that any Class 1 graph $G$ satisfies the following property: every proper $t$-edge coloring of $G$,  
$t\geq
\Delta(G)+1$, can be transformed to a proper $\Delta(G)$-edge coloring  
of $G$ by
a sequence of interchanges? (This property we shall call the Vizing property.)
\end{problem}

	Mohar \cite{Mohar1} used the edge coloring algorithm suggested by
	Vizing \cite{Vizing2} to prove that
	for a  graph $G$
	with a  proper $t$-edge coloring
	$f$, every proper $\chi'(G)$-edge coloring can
	be obtained from $f$ by a sequence of interchanges, provided that $t \geq \chi'(G)+2$.
	This means that all proper $t$-edge colorings of $G$ are Kempe equivalent if $t\geq
\chi'(G)+2$. 
	Mohar also posed the following:

\begin{problem}
\label{prob:mohar}
 Is it true that all proper $(\chi'(G)+1)$-edge colorings 
 of a  graph $G$ are Kempe equivalent?
\end{problem}

	For  graphs $G$ with $\Delta(G) \leq 3$, 
	Problem \ref{prob:mohar}   
	was resolved to the positive by McDonald et al. 
	\cite{McDonald}.  In fact they  proved  a stronger result:
\medskip
	
	\noindent {\bf Theorem B} \cite{McDonald}. Let $G$ be a  
	graph with $\Delta(G)\leq 3$. 
	Then all proper $(\Delta(G)+1)$-edge colorings of  $G$ are Kempe equivalent.\medskip

	In the general case the situation is different. It is shown 
	in \cite{McDonald} 		that if $\chi'(G)$ is replaced by $\Delta(G)$
	in Problem \ref{prob:mohar}, then we get a problem which in
	general has a negative answer. 

	McDonald et al. \cite{McDonald} obtained also 
 an affirmative answer to Problem 2 for some Class 2 graphs:

\noindent {\bf Theorem C} \cite{McDonald}. Let $G$ be a  graph 
	with $\Delta(G)=4$
	and $\chi'(G)=5$. 
	Then all proper $6$-edge colorings of  
	$G$ are Kempe equivalent.\medskip

  Our first result establishes a connection between Problem 1 and Problem 2.
  
 % \label{th:connect}
 %	Let $k\geq 2$ be an integer and let  every Class 1 graph with maximum degree not exceeding $k$
%	have the Vizing property. Then 
% \begin{itemize}
%	\item[(a)] 
% for every graph $G$ with $\Delta(G)\leq k$
 %	all proper $(\chi'(G)+1)$-edge colorings are Kempe equivalent.
%\item[(b)]	for every Class 2  graph $H$ with $\Delta(H)=k+1$ all proper $(k+3)$-edge colorings of $H$ 
%	are Kempe equivalent.
	
%		\end{itemize}
% \end{theorem}

  \begin{theorem}
  \label{th:connect}
  Let $k\geq 2$ be an integer. Then all Class 1 graphs with 
  maximum degree at most $k$ 
	have the Vizing property if and only if 
  for every graph $G$ with $\Delta(G)\leq k$
 	all proper $(\chi'(G)+1)$-edge colorings are Kempe equivalent.
 \end{theorem}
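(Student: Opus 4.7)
Plan: One direction (Mohar-type equivalence for all graphs with $\Delta\leq k$ implies Vizing's property for all Class 1 graphs with $\Delta\leq k$) is almost immediate. Given a Class 1 graph $G$ with $\Delta(G)=\Delta\leq k$ and a proper $t$-coloring $f$ with $t\geq\Delta+1$, split on $t$: if $t\geq\Delta+2=\chi'(G)+2$, Mohar's classical result cited in the introduction already converts $f$ to a $\Delta$-coloring; if $t=\Delta+1=\chi'(G)+1$, the hypothesis yields Kempe equivalence between $f$ and any fixed $\Delta$-coloring of $G$, viewed as a $(\Delta+1)$-coloring, which gives the desired reduction.

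The other direction (Vizing implies Mohar) is the main content. Take $G$ with $\Delta(G)=\Delta\leq k$ and two proper $(\chi'(G)+1)$-colorings $f_1,f_2$. If $G$ is Class 1 then $\chi'(G)=\Delta$, and by the Vizing property each $f_i$ is Kempe equivalent via $(\Delta+1)$-interchanges to some $\Delta$-coloring $g_i$; it then suffices to connect $g_1$ and $g_2$ by $(\Delta+1)$-interchanges. The plan is to first invoke Mohar's theorem with $t=\Delta+2$ to obtain a sequence of $(\Delta+2)$-colorings linking $g_1$ to $g_2$, and then to use the Vizing property (for $G$ and for pendant extensions $G+uv$, which remain Class 1 with maximum degree at most $k$ whenever $v$ has degree less than $\Delta$) to iteratively reroute this sequence, eliminating intermediate uses of the color $\Delta+2$. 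This rerouting will be the main technical obstacle: the \emph{existential} Vizing property only supplies reduction of each intermediate coloring to some $\Delta$-coloring, and extracting mutual $(\Delta+1)$-Kempe equivalence will require exploiting the universal quantification over Class 1 graphs of maximum degree at most $k$.

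If instead $G$ is Class 2 then $\chi'(G)=\Delta+1$, so $f_1,f_2$ are $(\Delta+2)$-colorings. Provided $\Delta\leq k-1$, form $G^*=G+uv$ by attaching a new pendant $u$ at a vertex $v$ of maximum degree in $G$; then $\Delta(G^*)=\Delta+1\leq k$, and $G^*$ is Class 1 since any $(\Delta+1)$-coloring of $G$ extends by coloring $uv$ with a color missing at $v$. Extend $f_1,f_2$ to $(\Delta+2)$-colorings of $G^*$, apply the Class 1 case just handled, and project the resulting interchange sequence back to $G$: interchanges on a component of $G^*(a,b)$ avoiding $uv$ descend directly to $G$, while interchanges on a component through $uv$ restrict to interchanges on the corresponding component of $G(a,b)$ through $v$. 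The case $\Delta=k$ for Class 2 graphs is the most delicate, since the pendant construction exceeds the degree bound; here the plan is to remove a color class $M$ from a proper $(\Delta+1)$-coloring of $G$ to obtain a Class 1 spanning subgraph $G-M$ of maximum degree at most $k$, apply the Vizing property to $G-M$, and then transfer Kempe equivalence of $(\Delta+1)$-colorings of $G-M$ back to Kempe equivalence of $(\Delta+2)$-colorings of $G$ by relabeling the deleted color class, which together with the Class 1 rerouting will be the main remaining obstacle.
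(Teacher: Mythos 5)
Your forward direction (Kempe equivalence implies the Vizing property) is fine and essentially what the paper does: reduce to $\Delta+1$ colors via Theorem A when $t\geq\Delta+2$, then use the hypothesis with a $\Delta$-edge coloring viewed as a $(\Delta+1)$-edge coloring. The converse, however, contains a genuine gap that you yourself flag but do not close. Having reduced two $(\Delta+1)$-edge colorings of a Class~1 graph to $\Delta$-edge colorings $g_1,g_2$, you still must connect $g_1$ and $g_2$ by interchanges using only $\Delta+1$ colors; your plan is to take a $(\Delta+2)$-color Kempe sequence from Mohar's theorem and ``reroute'' it to eliminate the extra color, but no mechanism for this rerouting is given, and you call it ``the main technical obstacle.'' That obstacle is precisely the whole content of this direction, and it is not clear that any local rerouting of a $(\Delta+2)$-sequence can succeed: the Vizing property only says each intermediate coloring reduces to \emph{some} $\Delta$-edge coloring, and nothing links those reductions to one another.

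The paper avoids this entirely by inducting on the maximum degree $k$ rather than on the number of colors. The key moves, absent from your proposal, are: (1) normalize the target $\chi'$-edge coloring $h$ so that $M(h,\chi')$ is a \emph{maximal} matching; (2) use the Vizing property (or Theorem A in the Class~2 case) to turn the given $(\chi'+1)$-edge coloring into a $\chi'$-edge coloring and then recolor the edges of $M(h,\chi')$ with color $\chi'+1$, so that the two colorings now \emph{agree} on the top color class; (3) delete that class and apply the induction hypothesis to $G_1=G-M(h,\chi')$, which has maximum degree $k-1$ except in the Class~2 case, where maximality of the matching forces the degree-$k$ vertices of $G_1$ to be independent, so Proposition~3.1 applies and a second peel brings the degree down to $k-1$. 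Your pendant-vertex construction and your color-class-deletion idea for the Class~2, $\Delta=k$ case gesture at step (3), but without the agreement step (2) there is nothing to project: the two colorings need not assign the deleted color to the same edge set, so Kempe equivalence on $G-M$ does not lift to $G$. To repair your argument you would need to add the maximal-matching normalization, the equalization of the top color class, and the induction on $k$ that makes the reduction terminate.
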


 Our second result establishes a connection between the solution of  Problem 2
 for graphs with maximum degree $k-1$ 
 and Class 2 graphs with maximum degree $k$.

\begin{theorem}
  \label{th:connect1}
   Let $k\geq 4$ be an integer. If for every graph $H$ with $\Delta(H)=k-1$
 	all proper $(\chi'(H)+1)$-edge colorings are Kempe equivalent,
	then for every Class 2 graph  $G$ with maximum degree $k$ all proper $(k+2)$-edge colorings are Kempe equivalent.
 \end{theorem}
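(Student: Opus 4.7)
My plan is to reduce the question of Kempe equivalence of $(k+2)$-edge colorings of $G$ to Kempe equivalence of $(k+1)$-edge colorings of a subgraph $G^{\star}\subseteq G$ with $\Delta(G^{\star})\le k-1$, where the hypothesis (together with Mohar's result) applies. Concretely, I aim to find a matching $M\subseteq E(G)$ saturating every vertex of $G$ of degree $k$, and to transform both $f$ and $g$ by interchanges so that in each resulting coloring the color class $k+2$ coincides with $M$.

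The first step is a normalization lemma: every proper $(k+2)$-edge coloring $h$ of $G$ is Kempe equivalent to a coloring in which color $k+2$ is incident to every vertex of degree $k$. I would prove this by an extremal argument: choose $h^{\star}$ Kempe equivalent to $h$ that maximises the number of degree-$k$ vertices incident to color $k+2$, and suppose for contradiction that $v\in V_k:=\{u:d_G(u)=k\}$ is uncovered. Then $v$ misses exactly two colors, one of which is $k+2$; for each other color $b$ used at $v$, the $(b,k+2)$-bichromatic component of $G_{h^{\star}}$ containing $v$ is a path ending at some vertex $w_b$, and a Vizing-fan type analysis of the missing colors at these endpoints forces either an improving interchange (contradicting extremality) or a structural obstruction that uses $k\ge 4$. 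After normalizing $f\to f'$ and $g\to g'$, a second round of interchanges equalizes the color-$(k+2)$ matchings: processing each edge $e\in M(f',k+2)\,\triangle\, M(g',k+2)$ in turn, I perform an interchange on the appropriate $(f'(e),k+2)$-bichromatic component of $G_{f'}$; since both endpoints of $e$ lie in $V_k$ and are incident to color $k+2$ by normality, they are interior vertices of that component, which lets the interchange add $e$ to the matching while preserving (with at most a secondary local repair) the saturation of $V_k$. The end result is $f''\sim f$ with $M(f'',k+2)=M(g',k+2)$.

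Denote $M:=M(f'',k+2)=M(g',k+2)$. Then $G^{\star}:=G-M$ satisfies $\Delta(G^{\star})\le k-1$, and the restrictions $f''|_{G^{\star}}$ and $g'|_{G^{\star}}$ are proper $(k+1)$-edge colorings of $G^{\star}$. If $G^{\star}$ is Class 2, then $\chi'(G^{\star})=k$ and the hypothesis applies directly, yielding $f''|_{G^{\star}}\sim g'|_{G^{\star}}$; if $G^{\star}$ is Class 1, then $\chi'(G^{\star})=k-1$ and $k+1=\chi'(G^{\star})+2$, so Mohar's result recalled in the introduction gives the same conclusion. (Should $\Delta(G^{\star})<k-1$ occur, it is handled by adjoining a disjoint auxiliary component of max degree $k-1$, since interchanges respect connected components.) Finally, any interchange used on $G^{\star}$ involves only colors in $\{1,\dots,k+1\}$, none of which appear on $M$, so each such interchange lifts verbatim to an interchange on $G$ that fixes $M$. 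Combining the four stages gives $f\sim g$.

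The main obstacle is the normalization lemma. A single interchange that forces color $k+2$ onto an uncovered vertex may simultaneously strip color $k+2$ from another degree-$k$ vertex, so a monotone extremal argument alone is insufficient; the fan-type analysis at the uncovered vertex---exploiting that $v$ has exactly two missing colors and $k\ge 4$ incident edges---is what enables either a strict improvement or a forbidden local configuration. A related subtlety is that in some Class 2 graphs (for instance $K_5$ when $k=4$) no matching of $G$ can saturate all of $V_k$. This must be addressed by a refined version of Step 1 that targets a maximum such matching, with any residual uncovered vertices handled by further local interchanges absorbed into Step 2, so that the subgraph $G^{\star}$ to which the hypothesis is applied still has maximum degree at most $k-1$.
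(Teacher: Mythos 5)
Your overall architecture --- strip off the top colour class as a matching $M$ covering the degree-$k$ vertices, pass to $G-M$ with maximum degree at most $k-1$, apply the hypothesis (or Mohar's theorem) there, and lift the interchanges back --- is the same reduction the paper uses. But there is a genuine gap at exactly the point you flag, and your proposed repair cannot work. For $G=K_5$ and $k=4$ no matching covers all five degree-$4$ vertices, so your normalization lemma is false and, more importantly, \emph{no} choice of matching $M$ yields $\Delta(G-M)\le k-1$. Since $G-M$ is a fixed graph, no amount of ``further local interchanges absorbed into Step 2'' can lower its maximum degree; once some degree-$k$ vertex is uncovered by $M$, the subgraph to which you want to apply the hypothesis simply has maximum degree $k$, and neither the hypothesis nor Mohar's result is available there. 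A second, smaller gap is the equalization step: making $M(f'',k+2)$ coincide with $M(g',k+2)$ edge by edge ``with at most a secondary local repair'' is asserted rather than proved, and a single interchange on a $(c,k+2)$-component can destroy saturation elsewhere.

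The paper closes both gaps with two devices your proposal is missing. First, since $G$ is Class 2, $k+2=\Delta(G)+2$, so Theorem A already lets one transform any proper $(k+2)$-edge colouring down to a proper $(k+1)$-edge colouring by interchanges; the class $k+2$ is then \emph{empty} and can be repopulated for free with a fixed target matching $M(h,k+1)$, chosen to be maximal, which makes your equalization step unnecessary. Second, when $M(h,k+1)$ fails to cover some degree-$k$ vertices, the maximality of the matching guarantees that the degree-$k$ vertices of $G_1=G-M(h,k+1)$ form an independent set, so $(G_1)_\Delta$ is acyclic; Proposition \ref{th:main2} (a Fournier-type fan argument) then eliminates the extra colour in $G_1$ by interchanges, after which a second matching $M(h,k)$ is peeled off to reach a subgraph of maximum degree $k-1$ where the hypothesis finally applies. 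Without something playing the role of Proposition \ref{th:main2} for the uncovered degree-$k$ vertices, your reduction does not go through.
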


	In the case $k=4$, Theorem \ref{th:connect1} actually  
	implies that Theorem C follows from Theorem B.

	The main result of  this paper is  the following  theorem which 
	gives a positive answer 
	to the smallest previously unsolved case
	of Problem 1.

\begin{theorem}
\label{th:main}	
	Let $G$ be a  Class 1 graph with $\Delta(G)=4$ and 
	$t$ be an integer, $t\geq 5$. Then every proper
	$t$-edge coloring of $G$ 
	can be transformed to any proper  $4$-edge coloring  of $G$  
	by a sequence of 	interchanges. 
\end{theorem}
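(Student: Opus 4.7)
The plan is to decompose the theorem into a routine reduction and a core Kempe-equivalence claim. First, whenever $t\geq 6 = \Delta(G)+2$, Theorem A transforms any proper $t$-edge coloring of $G$ into a proper $5$-edge coloring by a sequence of interchanges, so one may assume $t=5$. Next, observe that any proper $4$-edge coloring is in particular a proper $5$-edge coloring (with color $5$ unused). Hence the theorem reduces to the following core claim: all proper $5$-edge colorings of $G$ are Kempe equivalent. Given this, a proper $5$-edge coloring $f$ can be transformed to any specified proper $4$-edge coloring $g$ by viewing $g$ as a $5$-edge coloring and invoking the claim on $f$ and $g$.

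To prove the core claim, I would argue by induction on $|V(G)|+|E(G)|$. Since removing a vertex or an edge can destroy the Class~$1$ property, the inductive hypothesis has to be broadened, for instance to all graphs $H$ with $\Delta(H)\leq 4$ and $\chi'(H)\leq 4$. In the inductive step one looks for a \emph{reducible configuration}: a vertex of degree at most $3$, a short edge cut, or some other local subgraph whose deletion yields a smaller graph $G'$ still satisfying the broadened hypothesis. For two proper $5$-edge colorings $f,g$ of $G$, one restricts to $G'$, applies induction to obtain a Kempe-equivalence sequence for the restrictions, and then lifts that sequence back to $G$, arranging by auxiliary Kempe changes near the removed piece that the colors on that piece are reconciled. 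The Vizing property (any $5$-edge coloring of a Class~$1$ graph with $\Delta\leq 4$ can be transformed to some $4$-edge coloring by the usual fan-and-Kempe-chain machinery, with the Class~$1$ hypothesis preventing the procedure from stalling) and Theorem B for graphs with $\Delta\leq 3$ would be natural auxiliary tools throughout.

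The main obstacle is the $4$-regular case. Here every proper $5$-edge coloring is rigid in the sense that exactly one color is missing at each vertex, and no vertex admits an obvious deletion preserving the hypotheses. A plausible attack is to fix an edge $e=uv$ on which $f$ and $g$ disagree, perform a carefully chosen sequence of Kempe chains and fans at $u$ and $v$ to align $f$ with $g$ on $e$, and then iterate. The delicate point is that interchanges act globally: the component of $G_f(a,b)$ through $e$ may sprawl across the graph and disturb agreements already established on distant edges. Controlling this interaction through detailed case analysis of the local structure around $e$, combined with a clever sequencing of the chains (and perhaps a separate reduction to $\Delta\leq 3$ subgraphs on which Theorem B becomes applicable), is where I expect the bulk of the technical work to lie.
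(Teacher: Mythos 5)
Your opening reduction (use Theorem A to get from $t\geq 6$ down to $t=5$, then view the target $4$-edge coloring as a $5$-edge coloring, so that the theorem becomes Kempe equivalence of all proper $5$-edge colorings) is correct and matches the paper. Everything after that is a plan rather than a proof, and the plan has two concrete defects. First, the deletion-and-lift induction does not work as stated: if you pass to $G'=G-v$ (or remove any ``reducible configuration''), a Kempe chain of $G'$ need not be a Kempe chain of $G$, since the corresponding bicolored component of $G$ may extend into the deleted edges; so the inductive Kempe sequence for the restrictions does not lift, and ``auxiliary Kempe changes near the removed piece'' is not a mechanism. The paper goes in the opposite direction: it \emph{embeds} $G$ into a $4$-regular Class~1 graph by repeatedly taking two disjoint copies and joining corresponding vertices of degree less than $4$ by an edge colored with a color of $\{1,2,3,4\}$ missing there; interchanges in the big graph restrict to interchange sequences in $G$, so only the $4$-regular case needs to be proved. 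Second, you invoke ``the Vizing property\dots with the Class~1 hypothesis preventing the procedure from stalling'' as an auxiliary tool, but that property for $\Delta=4$ is precisely the open case of Problem~1 that this theorem settles; the fan-and-chain machinery only reaches $\Delta+1$ colors, and assuming it reaches $\Delta$ is circular.

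The genuine gap is the $4$-regular case, which you correctly identify as the crux and then explicitly defer. The missing ideas are: (1) a monotone progress measure --- transform $f$ by interchanges so that $|M(f,1)\cap M(h,1)|$ strictly increases until $M(f,1)=M(h,1)$, at which point $G-M(h,1)$ is subcubic and Theorem B finishes the job on the remaining colors; and (2) the mechanism for one increment --- take the maximal $(1,2)$-bicolored path or cycle $P$ through an edge with $f$-color $2$ and $h$-color $1$; if $P$ carries no $h$-correct edge of color $1$, one interchange on $P$ gains an edge; otherwise use interchanges on bicolored paths with colors from $\{3,4,5\}$ (the content of the paper's Lemmas 2.1--2.3, a delicate analysis of the sets $f(v_i)$ at consecutive internal vertices of $P$ and of the two extra neighbours of $v_2$) to free a color of $\{3,4,5\}$ on some edge $v_iv_{i+1}$ of $P$, recolor that edge to cut $P$ into a shorter maximal $(1,2)$-path containing no correct color-$1$ edge, and interchange on it. Your ``fix an edge where $f$ and $g$ disagree and iterate'' has no such monotone quantity, so the global-interference problem you yourself raise is unanswered and the iteration could cycle. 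As written, the proposal establishes only the routine reductions and leaves the substance of the theorem unproved.
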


	By Theorem \ref{th:main}, any two proper $5$-edge colorings of a Class 1
	graph $G$ with $\Delta(G) =4$ can be transformed by interchanges
	to the same proper $4$-edge coloring of $G$. This implies
	the next result, which gives a positive answer to the smallest 
	previously unsolved
	case of Problem 2.
	%The next result follows from Theorem \ref{th:main} and
	%gives a positive answer to the smallest unsolved case of Problem 2.

\begin{corollary}
\label{cor:equiv}
 If $G$ is a  Class 1 graph with $\Delta(G)=4$, then all  proper $5$-edge colorings of
 $G$ are Kempe equivalent.
\end{corollary}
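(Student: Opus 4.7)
The plan is to deduce Corollary \ref{cor:equiv} as an immediate consequence of Theorem \ref{th:main}, using the fact that Kempe equivalence is an equivalence relation on the set of proper edge colorings of $G$. No additional combinatorial work on the structure of $G$ is needed beyond invoking the main theorem.

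First I would fix a proper $4$-edge coloring $g$ of $G$, which exists because $G$ is Class $1$ with $\Delta(G)=4$, so $\chi'(G)=4$. Given two arbitrary proper $5$-edge colorings $f_1$ and $f_2$ of $G$, I would apply Theorem \ref{th:main} (with $t=5$) twice: once to obtain a sequence $\sigma_1$ of interchanges carrying $f_1$ to $g$, and once to obtain a sequence $\sigma_2$ of interchanges carrying $f_2$ to $g$. The crucial observation is that any interchange is self-inverse: if one switches colors $a$ and $b$ on a connected component $C$ of $G_f(a,b)$ to obtain $f'$, then the component of $G_{f'}(a,b)$ supported on $V(C)$ is the same set of edges, and performing the switch again restores $f$. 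Consequently $\sigma_2$ can be reversed to yield a sequence of interchanges from $g$ to $f_2$; concatenating $\sigma_1$ with this reversal exhibits $f_1$ and $f_2$ as Kempe equivalent.

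Since this argument uses nothing except the statement of Theorem \ref{th:main} and the involutive nature of interchanges, there is effectively no obstacle in the proof of the corollary itself; the entire difficulty is concentrated in Theorem \ref{th:main}, which must supply transformations from \emph{both} $5$-colorings down to a common $4$-coloring.
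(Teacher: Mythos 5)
Your proposal is correct and matches the paper's argument exactly: the paper also deduces the corollary by transforming both $5$-edge colorings to a common proper $4$-edge coloring via Theorem \ref{th:main} and using the reversibility of interchanges. Your explicit remark that each interchange is self-inverse is a point the paper leaves implicit, but the route is the same.
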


%\begin{proof}
%	Let $f$ and $g$ be proper $5$-edge colorings of $G$, and $\varphi$
%	a proper $4$-edge coloring of $G$. Then $f$ and $\varphi$ are
%	Kempe equivalent by Theorem \ref{th:main}, and the same holds for
%	$g$ and $\varphi$. Hence $f$ and $g$ are Kempe equivalent as well.
%\end{proof}

Theorem \ref{th:connect1}, Theorem C and Corollary \ref{cor:equiv} imply the following result:\medskip

\begin{corollary}
\label{th:reduction}
 	Let $G$ be a Class 2 graph with $\Delta(G)=5$. Then all proper 7-edge colorings 
	of $G$ are Kempe equivalent.
\end{corollary}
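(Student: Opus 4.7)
The plan is to deduce the statement directly from Theorem \ref{th:connect1} applied with $k=5$, using Corollary \ref{cor:equiv} and Theorem C to verify the hypothesis. Concretely, Theorem \ref{th:connect1} requires that for every graph $H$ with $\Delta(H)=k-1=4$, all proper $(\chi'(H)+1)$-edge colorings are Kempe equivalent; once this is checked, the theorem delivers Kempe equivalence of all proper $(k+2)=7$-edge colorings of any Class 2 graph with maximum degree $5$, which is precisely the claim.

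The verification of the hypothesis splits according to the class of $H$. First, if $H$ is a Class 1 graph with $\Delta(H)=4$, then $\chi'(H)=4$, so $(\chi'(H)+1)$-edge colorings are $5$-edge colorings, and Corollary \ref{cor:equiv} says that all such colorings are Kempe equivalent. Second, if $H$ is a Class 2 graph with $\Delta(H)=4$, then $\chi'(H)=5$, so $(\chi'(H)+1)$-edge colorings are $6$-edge colorings, and Theorem C says these are all Kempe equivalent. Together, these two cases cover all graphs of maximum degree $4$ and supply the hypothesis of Theorem \ref{th:connect1} for $k=5$.

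Since the whole argument is a single invocation, there is no substantial obstacle: the only point that requires attention is the correct pairing of the chromatic-index cases with the two input results (Corollary \ref{cor:equiv} for Class 1 and Theorem C for Class 2), so that the uniform statement ``all proper $(\chi'(H)+1)$-edge colorings are Kempe equivalent'' holds regardless of the class of $H$. Having done this, Theorem \ref{th:connect1} is applied verbatim to conclude that all proper $7$-edge colorings of any Class 2 graph $G$ with $\Delta(G)=5$ are Kempe equivalent, yielding the corollary.
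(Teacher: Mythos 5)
Your proposal is correct and is exactly the argument the paper intends: the corollary is stated as an immediate consequence of Theorem \ref{th:connect1} (with $k=5$), Corollary \ref{cor:equiv}, and Theorem C, and your case split by the class of $H$ to verify the hypothesis is the right (and only) point needing care. Nothing further is required.
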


	 Note that Corollary \ref{cor:equiv}
	and Theorem B  imply that
	Problem 2 has an affirmative answer for all graphs
	with maximum degree at most $4$. Additionally,
	Corollary \ref{th:reduction} settles the question
	for Class 2 graphs with maximum degree $5$,
	so 
	the smallest unsolved
	case is Class 1 graphs of maximum degree $5$.

	Next, we describe some Class 1 graphs $G$ with $\Delta(G)\geq 5$
	for which all proper $(\chi'(G)+1)$-edge colorings are Kempe equivalent.
	Denote by $G_{\geq 5}$ the subgraph of $G$ induced by the set of vertices with
		degree at least $5$.
		
	\begin{theorem}
\label{th:main3}	
	For every  graph $G$ with $\Delta(G)\geq 5$ where the subgraph 
	$G_{\geq 5}$ is acyclic,
	all proper $(\chi'(G)+1)$-edge colorings of $G$ are Kempe equivalent.
	\end{theorem}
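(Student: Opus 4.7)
The plan is to prove Theorem \ref{th:main3} by induction on $h(G) := |V(G_{\geq 5})|$, the number of vertices of degree at least $5$ in $G$, setting $t := \chi'(G) + 1$ throughout. For the base case $h(G)=0$ we have $\Delta(G) \le 4$: if $\Delta(G) \le 3$ and $G$ is Class~$1$, Theorem B directly gives the conclusion; if $\Delta(G) \le 3$ and $G$ is Class~$2$, then $t = \Delta(G)+2$, so every proper $t$-coloring can first be transformed to a $\chi'(G)$-coloring by Theorem A and all $\chi'(G)$-colorings are Kempe equivalent by Theorem B; and for $\Delta(G)=4$, Corollary \ref{cor:equiv} or Theorem C applies according to the class of $G$.

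For the inductive step, assume $h(G) \ge 1$. Since $G_{\geq 5}$ is a forest, it contains a vertex $v$ that is either isolated in $G_{\geq 5}$ or a leaf of it, so $v$ has at most one neighbor $u$ of degree $\ge 5$ in $G$; the remaining $d-1$ neighbors $w_1,\dots,w_{d-1}$ (with $d = d_G(v) \ge 5$) have degree at most $4$ in $G$. I would construct an auxiliary graph $G^*$ from $G$ by \emph{splitting} $v$ into copies $v_1, v_2, \dots$, each adjacent to $u$ in at most one copy and to a block of the $w_i$'s, arranged so that every copy has degree at most $4$ in $G^*$. Then $G^*_{\geq 5} = G_{\geq 5} - v$ is still acyclic and $h(G^*) = h(G)-1$. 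Any proper $t$-coloring $f$ of $G$ descends naturally to a proper $t$-coloring of $G^*$ in which the copies of $v$ use pairwise disjoint color sets. By the inductive hypothesis applied to $G^*$ (or, when $\chi'(G^*) < \chi'(G)$ and hence $t \ge \chi'(G^*)+2$, by Mohar's theorem in \cite{Mohar1}), any two such $t$-colorings $f,g$ of $G$ are Kempe equivalent \emph{as $t$-colorings of $G^*$}.

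The main obstacle is that the interchange relations in $G$ and in $G^*$ differ at $v$: a connected component of the $(a,b)$-colored subgraph $G_f(a,b)$ containing $v$ can split into several components in $G^*_f(a,b)$ (one per copy of $v$ it visits), and performing a $G^*$-interchange on only one of these components may ruin the ``disjoint colors at the copies of $v$'' property, producing a $G^*$-coloring that is not a proper $G$-coloring. The heart of the argument must therefore show that the $G^*$-Kempe sequence from $f$ to $g$ can be modified so that every intermediate step still represents a valid $G$-coloring. I would attempt this via a double use of the available slack: first, since $t \ge d+1$, at least one color is missing at $v$; second, each $w_i$ has degree at most $4$, hence at least two missing colors, giving room to perform preliminary local interchanges near $v$ that either merge the offending $G^*$-components into one (which does lift to a $G$-interchange) or rearrange the split of $v$'s color set across $v_1,v_2,\dots$ so that the problematic Kempe chains disappear. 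The leaf structure of $v$ in $G_{\geq 5}$, i.e.\ the fact that any high-degree neighbor $u$ is unique, is essential here, because it ensures that these local adjustments propagate only through low-degree vertices and terminate quickly.
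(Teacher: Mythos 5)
Your overall strategy --- induction on the number of vertices of degree at least $5$, splitting a leaf $v$ of the forest $G_{\geq 5}$ into low-degree copies and invoking the inductive hypothesis on the split graph $G^*$ --- is genuinely different from the paper's, which instead inducts on $\Delta(G)$: there one normalizes the target coloring $h$ so that $M(h,\chi')$ is a maximal matching, uses Proposition \ref{th:main2} (the Fournier-type recoloring algorithm, which works precisely because $G_\Delta$ is acyclic) to bring an arbitrary $(\chi'+1)$-coloring down to $\chi'$ colors by interchanges, and then deletes the color class $M(h,\chi')$ to pass to a graph of maximum degree $\Delta-1$ that still lies in the relevant class. The deletion of a color class is an operation under which Kempe chains in the subgraph literally \emph{are} Kempe chains in $G$, so nothing needs to be lifted.

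That is exactly where your proposal has a genuine gap. You correctly identify that a component of $G_f(a,b)$ through $v$ may split into two components of $G^*_f(a,b)$ (one ending at the copy of $v$ carrying color $a$, the other at the copy carrying color $b$), and that interchanging on only one of them destroys the property that the copies of $v$ see pairwise disjoint color sets, i.e.\ yields a $G^*$-coloring that does not correspond to any proper coloring of $G$. But the resolution of this is the entire content of the inductive step, and you only gesture at it (``I would attempt this via a double use of the available slack \dots''). The inductive hypothesis hands you \emph{some} Kempe sequence in $G^*$ between the images of $f$ and $g$, with no control whatsoever over its intermediate colorings; these need not keep the copies of $v$ color-disjoint even approximately, and there is no argument that the sequence can be locally repaired, reordered, or that offending chains can be ``merged.'' Performing both halves of a split chain does compose to a single $G$-interchange, but nothing guarantees the given sequence pairs up its interchanges in this way, and a general resynchronization lemma of this kind is not routine --- it is at least as delicate as the problem you are trying to solve. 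As written, the proof does not go through; to salvage the splitting idea you would need to prove a concrete lifting lemma (e.g.\ that Kempe equivalence of $t$-colorings of $G^*$ restricted to the subfamily of colorings with color-disjoint copies of $v$ can be realized within that subfamily, using the slack $t\geq d_G(v)+1$ and the two missing colors at each $w_i$), and no such statement or proof is supplied.
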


		By Corollary \ref{cor:equiv}, 
		all proper $5$-edge colorings of a Class 1
	 graph  with maximum degree $4$ are Kempe equivalent. 
	Let us now briefly consider the problem of transforming a
	proper $4$-edge coloring of such a graph $G$
	to another proper $4$-edge coloring of $G$.
	The next example shows that there are such graphs 
	with two proper $4$-edge
	colorings $f$ and $g$ such that $f$ cannot be transformed to $g$  
	using not only 	interchanges but also transformations on
	3-edge colored subgraphs.

\begin{figure} %[H]
\label{fig:graph}
\begin{center}
\begin{tikzpicture}[thick,scale=1.1]

\coordinate(x_1) at (0,2);
%\node at (-0.5,2) {$x_1$};
\node at (-0.7,1.7) {$1$};
\node at (0.75,2.3) {$2$};
\node at (0.1,0.4) {$3$};
\coordinate(x_2) at (1.5,2);
%\node at (2,2) {$x_2$};
\node at (2.2,1.7) {$1$};
\node at (1.37,1.35) {$3$};
\node at (1.0,1.65) {$4$};
\node at (0.7,1.35) {$4$};
\coordinate(x_3) at (2.5,1);
\node at (2.2,0.4) {$2$};
%\node at (3,1) {$x_3$};
\coordinate(x_4) at (1.5,0);
\node at (0.6,0.45) {$4$};
\node at (0.75,-0.3) {$1$};
%\node at (1.5,-0.5) {$x_4$};
\coordinate(x_5) at (0,0);
\node at (-0.8,0.4) {$2$};
%\node at (0,-0.5) {$x_5$};
\coordinate(x_6) at (-1,1);
%\node at (-1.5,1) {$x_6$};
\node at (-0.4,0.78) {$3$};

%\node at (-1.5,-1.5) {The graph $G$ with an edge coloring $f$.};

\coordinate(y_1) at (6,2);
\node at (5.3,1.7) {$1$};
\node at (6.75,2.3) {$2$};
\node at (6.1,0.4) {$3$};

\coordinate(y_2) at (7.5,2);
\node at (7.37,1.35) {$1$};
\node at (8.2,1.7) {$4$};
\node at (7.0,1.65) {$3$};
\node at (6.7,1.35) {$4$};

\coordinate(y_3) at (8.5,1);
\node at (8.2,0.4) {$3$};

\coordinate(y_4) at (7.5,0);
\node at (6.75,-0.3) {$2$};

\coordinate(y_5) at (6,0);
\node at (5.2,0.4) {$4$};

\coordinate(y_6) at (5,1);

\node at (5.6,0.78) {$2$};
\node at (6.6,0.45) {$1$};

%\node at (6,-1.5) {The graph $G$ with an edge coloring $g$.};

%\draw[ultra thick] (c)--(h);

\draw[thin] (x_1)--(x_2);

\draw[thin] (x_1)--(x_4);
\draw[thin] (x_1)--(x_5);
\draw[thin] (x_1)--(x_6);
\draw[thin] (x_2)--(x_3);
\draw[thin] (x_2)--(x_4);
\draw[thin] (x_2)--(x_6);
\draw[thin] (x_3)--(x_4);
\draw[thin] (x_3)--(x_5);
\draw[thin] (x_3)--(x_6);

\draw[thin] (x_4)--(x_5);

\draw[thin] (x_5)--(x_6);

\draw[thin] (y_1)--(y_2);
\draw[thin] (y_1)--(y_4);
\draw[thin] (y_1)--(y_5);
\draw[thin] (y_1)--(y_6);
\draw[thin] (y_2)--(y_3);
\draw[thin] (y_2)--(y_4);
\draw[thin] (y_2)--(y_6);
\draw[thin] (y_3)--(y_4);
\draw[thin] (y_3)--(y_5);
\draw[thin] (y_3)--(y_6);

\draw[thin] (y_4)--(y_5);
\draw[thin] (y_5)--(y_6);

\foreach \x in {x_1,x_2,x_3,x_4,x_5,x_6,y_1,y_2,y_3,y_4,y_5,y_6}
\fill (\x) circle (0.08);

\end{tikzpicture}
\end{center}
\caption{A graph with the edge colorings $f$ (to the left)
and $g$ (to the right).}
\end{figure}
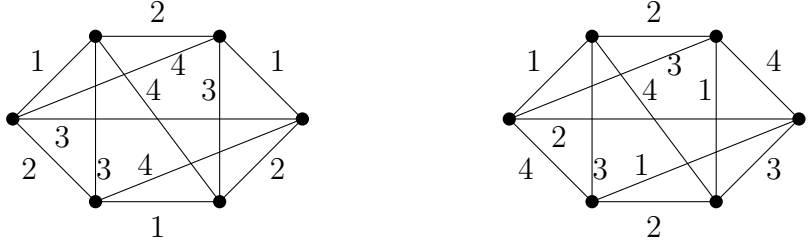	

	Consider the graph $G$ in Figure 1 with
	two different proper $4$-edge colorings $f$ 
	(to the left) and $g$ 
	(to the right).
%
%
%	Consider  the graph $G$ with vertices $x_1,...,x_6$ and  
%	its two proper 4--colorings $f$ and $g$ where
%$$M(f,1)=\{x_1x_5,x_2x_4,x_3x_6\}, M(f,2)=\{x_1x_4,x_2x_6,x_3x_5\},$$
%$$M(f,3)=\{x_1x_6,x_2x_3,x_4x_5\}, M(f,4)=\{x_1x_3,x_2x_5,x_4x_6\},$$
%$$ M(g,1)=\{x_1x_3,x_2x_6,x_4x_5\}, M(g,2)=\{x_1x_5,x_2x_3,x_4x_6\},$$
%$$ M(g,3)=\{x_1x_6,x_2x_4,x_3x_5\}, M(g,4)=\{x_1x_4,x_2x_5,x_3x_6\}.$$
	Clearly,
	$g$ cannot be obtained from $f$ by renaming the colors.
	On the other  hand any proper $3$-edge coloring of a
	subgraph $G(t_1,t_2,t_3)$ induced by the  set of
	edges $M(f,t_1)\cup M(f,t_2)\cup M(f,t_3)$ 
	gives the same partition of  edges of
	$G(t_1,t_2,t_3)$ into perfect matchings, for any $1\le t_1 < t_2 < 
	t_3\le 5$.
	This follows from the fact that $G(t_1,t_2,t_3)$ contains a
	triangle and  edges of this triangle belong to disjoint,
	uniquely defined perfect matchings.
	Therefore $g$ cannot be  obtained from $f$ even by
	transformations on 2-edge colored and  3-edge colored subgraphs.

	For a bipartite graph $G$ with $\Delta(G)=4$,
	any two proper $4$-edge colorings can always be transformed into each
	other by %using only
	recoloring 2-edge colored and 3-edge colored subgraphs.
	This follows from the result of  Asratian and Mirumian \cite{Asratian2} 
	(see also \cite{Asratian1}) that if $G$ is a bipartite graph
	with $\Delta(G)\geq 4$, then any proper 
	$\Delta(G)$-edge coloring of $G$ can be transformed 
	to any other proper $\Delta(G)$-edge coloring of $G$ in 
	such a way that each intermediate
	coloring is a proper $\Delta(G)$-edge coloring and differs 
	from the previous coloring by a
	$2$-- or $3$-edge colored subgraph.

\section{Preliminary results}

	First we introduce some terminology and notation.
	Let $\varphi$ be a proper $t$-edge coloring of $G$.
		For a vertex $v \in V(G)$,
	we say that a color $i$ {\em appears at $v$
	under $\varphi$} if there is an edge $e$ incident to $v$
	with $\varphi(e) = i$, and we set
	$$\varphi(v) = \{\varphi(e) : e \in E(G) \text{ and $e$ is incident to $v$}\}.$$

	If $\varphi$ is a proper $t$-edge coloring of $G$
	and $1 \leq a,b \leq t$, then a path or cycle in
	$G_\varphi(a,b)$ is called {\em $(a,b)$-colored under $\varphi$}.
	We also say that such a path or cycle is \emph{bicolored under $\varphi$}.
	A {\em $\varphi$-fan} at a vertex $v \in V(G)$
	is a sequence $(e_1,e_2,\dots,e_n)$ of edges
	such that 
	\begin{itemize}
	
	\item[(i)] the edges $e_1,e_2,\dots,e_n$ are distinct and incident to $v$,
	
	\item[(ii)] if $e_i = vu_i$,
	then $\varphi(e_i)$ does not appear at $u_{i-1}$, %under $\varphi$,
	$i=2,\dots, n$.
	
	\end{itemize}
	For such a $\varphi$-fan,
	we say that color $\varphi(e_i)$ is {\em associated with} $u_{i-1}$.
	A $\varphi$-fan $(e_1,e_2,\dots, e_k)$ at $v$ with $e_k = vu_k$
	is \emph{saturated} if there is a color $c$  
	such that $c \notin f(v) \cup f(u_k)$.
	For such a saturated $\varphi$-fan 
	$(e_1,e_2,\dots, e_k)$ we define a {\em downshift}
	as the following operation: define a new coloring $\varphi'$
	from $\varphi$ by setting $ \varphi'(e_k) = c$,
	$\varphi'(e_j) = \varphi(e_{j+1})$, $j=1,\dots,k-1$,
	and retaining the color of every other edge of $G$.
	Note that a downshift produces a new proper edge coloring
	and that it may be seen as a sequence of interchanges.
	
	In all the above definitions, we often leave out the explicit 
	reference to a
	coloring $\varphi$, if the coloring
	is clear from the context.
	The following three lemmas are fundamental for our proof of Theorem
	\ref{th:main}.

\begin{lemma}
\label{lem:onesided}
	Let $G$ be a $4$-regular graph,
	 $f$ be a proper $5$-edge coloring of $G$
	and assume that $P$ is a $(1,2)$-colored path under $f$
	in $G$.
	
\begin{itemize}
	\item[(a)] 
	Suppose that $P$ has length $4$ and the internal vertices
	along $P$ are $v_1,v_2,v_3$. Assume further that at least one
	of the following conditions does not hold:
	
	\begin{itemize}
	\item[(i)] $|f(v_i) \cap f(v_j) \setminus \{1,2\}|= 1$ for $i,j \in \{1,2,3\}$
	satisfying $i \neq j$,
	\item[(ii)] if $x_1$ and $x_2$
	are the vertices  distinct from $v_1$ and $v_3$
	that are adjacent to $v_2$,
	then colors $3,4,5$ appear at $x_1$ and $x_2$ under $f$.
	
	\end{itemize}

Then there is a proper $5$-edge coloring $g$  of $G$ which can be obtained from $f$ by a
sequence of interchanges on bicolored paths 
	with colors from $\{3,4,5\}$, such that
 one of colors $3,4,5$ is missing
	either in the set $ g(v_1) \cup g(v_2)$ or in $ g(v_2) \cup g(v_3)$.
	
	\item[(b)] 
If $P$ has length $5$ and $v_1,v_2,v_3, v_4$ are the
	internal vertices along $P$, then there is 
a proper $5$-edge coloring $g$  of $G$ which can be obtained from $f$ by a
sequence of interchanges on bicolored paths 
	with colors from $\{3,4,5\}$, such that one of colors $3,4,5$ is missing in
the set $ g(v_i) \cup g(v_{i+1})$, for
 some $i \in \{1,2,3\}$.
	
	\end{itemize}
\end{lemma}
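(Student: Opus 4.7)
Since $G$ is $4$-regular and $f$ is a proper $5$-edge coloring, every vertex has exactly one missing color; write $m(v;f)$ for it, and set $m_i=m(v_i;f)$. Because both colors $1$ and $2$ appear at every internal vertex of $P$, we have $m_i\in\{3,4,5\}$. The goal ``some color of $\{3,4,5\}$ is absent from $g(v_i)\cup g(v_{i+1})$'' translates to ``$m_i(g)=m_{i+1}(g)$''. A Kempe change on an $(a,b)$-component preserves $m(w)$ for every $w$ that is not an endpoint of that component, and toggles between $a$ and $b$ at each endpoint. Finally, condition (i) is equivalent to $m_1,m_2,m_3$ being pairwise distinct, since two distinct $2$-subsets of $\{3,4,5\}$ intersect in exactly one element.

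\paragraph{Part (a): case 1, condition (i) fails.} If $m_1=m_2$ or $m_2=m_3$, take $g=f$. Otherwise $m_1=m_3=a\neq m_2=b$. Since $v_2$ carries exactly one of $\{a,b\}$, it is an endpoint of its $(a,b)$-component; so are $v_1$ and $v_3$. Performing the $(a,b)$-interchange on the component of $v_2$ toggles $m_2$ to $a$, and examining the three possibilities for the opposite endpoint (some other vertex, $v_1$, or $v_3$) yields in each case a consecutive match.

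\paragraph{Part (a): case 2, (i) holds and (ii) fails.} Then $\{m_1,m_2,m_3\}=\{3,4,5\}$, and WLOG a color $c\in\{3,4,5\}$ is missing at $x_1$. Set $\alpha=f(v_2x_1)\in\{3,4,5\}\setminus\{m_2\}=\{m_1,m_3\}$, noting $c\neq\alpha$. By reflecting $P$, assume $\alpha=m_1$, so $c\in\{m_2,m_3\}$. If $c=m_2$, then neither $v_2$ nor $x_1$ has an incident $c$-edge, so $\{v_2x_1\}$ is its own $(c,\alpha)$-component; recoloring that edge to $c$ gives $m_2(g)=m_1$ and we finish. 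If $c=m_3$, I would examine the three candidate interchanges of type $(m_1,m_2)$, $(m_2,m_3)$ and $(m_1,m_3)$ on the components through $v_2$ or $x_1$. A direct tracing shows that each interchange produces a consecutive match \emph{unless} the opposite endpoint of the relevant component is a specific vertex from $\{v_1,v_3\}$; in the latter ``obstructed'' situation the swap either reduces the coloring to the setting of case 1 (after which one further swap completes the argument) or merely permutes $m_1,m_2,m_3$, and I would then compose with a second interchange chosen so as to exploit that in a $4$-regular graph a vertex $v_j$ can be an endpoint only of bichromatic subgraphs involving $m_j$.

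\paragraph{Part (b).} If some consecutive pair of $m_i$'s already coincides under $f$, take $g=f$. Otherwise $m_i\neq m_{i+1}$ for $i=1,2,3$, and since we have $4$ positions and only $3$ available colors in $\{3,4,5\}$, pigeonhole forces some color to repeat, with the repeated positions necessarily $\{1,3\}$, $\{2,4\}$ or $\{1,4\}$. The first two reduce directly to case 1 of part (a) applied to the triple $(v_1,v_2,v_3)$ or $(v_2,v_3,v_4)$. The remaining case, where $(m_1,m_2,m_3,m_4)=(a,b,c,a)$ with $\{a,b,c\}=\{3,4,5\}$, is handled by an analogous case analysis of the endpoints of the $(a,b)$-, $(b,c)$- and $(a,c)$-subgraphs among $v_1,\ldots,v_4$: each interchange produces a consecutive match unless the component's opposite endpoint is one specific $v_j$, and the ``all obstructed'' configuration can be broken by composing two swaps.

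\paragraph{Main obstacle.} The central difficulty is the subcase $c=m_3$ of case 2 in part (a) and the $(a,b,c,a)$-pattern in part (b): a single interchange may merely permute the missing colors of the $v_i$ without creating a consecutive match, so the proof must verify that, within the finitely many available Kempe changes using colors of $\{3,4,5\}$, some composition always achieves the desired configuration. Establishing this requires tracking which of the distinguished vertices $\{v_1,v_2,v_3\}$ (respectively $\{v_1,v_2,v_3,v_4\}$) serve as endpoints of each bichromatic subgraph and ruling out the simultaneous occurrence of all ``bad'' endpoint pairings.
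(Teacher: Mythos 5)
Your framework (recasting everything in terms of the unique missing color $m_i \in \{3,4,5\}$ at each internal vertex, and noting that an interchange changes the missing color only at the endpoints of the recolored component) is sound, and your treatment of part (a) when (i) fails, of the subcase $c=m_2$, and your pigeonhole reduction in part (b) are correct and essentially match the paper's argument. However, there is a genuine gap exactly where you flag the ``main obstacle'': in the subcase $c=m_3$ of part (a) and in the $(a,b,c,a)$ pattern of part (b) you never actually exhibit the interchanges. You only assert that ``a direct tracing shows'' each single interchange works unless obstructed, and that ``the all-obstructed configuration can be broken by composing two swaps.'' That assertion is precisely the content of the lemma in the hard case, and it is not automatic: a priori a sequence of Kempe changes on $\{3,4,5\}$-bicolored paths could keep permuting $(m_1,m_2,m_3)$ without ever producing a consecutive repetition, so one must display a concrete terminating sequence and verify it.

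The paper does this explicitly (it phrases part (a) contrapositively: if no such $g$ exists, then (i) and (ii) both hold). For the subcase you call $c=m_3$ (in its normalization $f(v_1)=\{1,2,3,5\}$, $f(v_2)=\{1,2,4,5\}$, $f(v_3)=\{1,2,3,4\}$, $f(v_2x_1)=5$, and $4 \notin f(x_1)$) it takes the maximal $(3,4)$-colored path $Q$ with origin $v_1$: if $v_2$ is not the other endpoint, the interchange leaves $3$ missing at both $v_1$ and $v_2$; if $v_2$ is the other endpoint, then after the interchange $4$ is missing at $v_2$, and since $x_1 \notin V(Q)$ (it misses $4$, so it is not internal to $Q$, and the endpoints of $Q$ are $v_1,v_2$) the single edge $v_2x_1$ forms a full $(4,5)$-component; recoloring it with $4$ leaves $5$ missing at both $v_2$ and $v_3$. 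A symmetric argument with a $(3,5)$-path from $v_3$ handles a deficiency at $x_2$. For the remaining pattern in part (b) (there $f(v_4)=\{1,2,3,5\}$), the paper interchanges on the maximal $(3,4)$-path from $v_2$: either this immediately removes $4$ from $f(v_1)\cup f(v_2)$, or $v_1$ is its other endpoint, in which case $v_2$ and $v_4$ acquire the same missing color, condition (i) fails for the triple $v_2,v_3,v_4$, and part (a) finishes. To complete your proof you would need to supply these (or equivalent) explicit sequences together with the endpoint bookkeeping showing that the ``obstructed'' alternatives cannot persist after the second move.
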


\begin{proof}
	We first prove (a).  By the definition of the path 
	$P$, $\{1,2\} \subseteq f(v_i)$, for $i=1,2,3$.  
	We show that if the conclusion is false, then (i) and (ii) hold.
	So suppose that the conclusion of the lemma is false.
	Then $\{3,4,5\} \subseteq f(v_1) \cup f(v_2)$
	and $\{3,4,5\} \subseteq f(v_2) \cup f(v_3)$,  since otherwise
	the conclusion follows immediately by setting $g=f$. These two properties 
	together with the conditions $\Delta(G)=4$ and 
	$\{1,2\} \subseteq f(v_i)$, for 	$i=1,2,3$,  imply that  
	 $|f(v_1) \cap f(v_2) \setminus \{1,2\}| \leq 1$
	and
	$|f(v_2) \cap f(v_3) \setminus \{1,2\}| \leq 1$. 
	Since $G$ is $4$-regular, we thus have
	$|f(v_1) \cap f(v_2) \setminus \{1,2\}| = 1$ and
	$|f(v_2) \cap f(v_3) \setminus \{1,2\}| = 1$.

	We now prove that $|f(v_1) \cap f(v_3) \setminus \{1,2\}| = 1$.
	Suppose that this is not true, that is, 
	$|f(v_1) \cap f(v_3) \setminus \{1,2\}| = 2$.
	Without loss of generality we may assume that
	$f (v_1) = f(v_3) =\{1,2,3,5\}$. Then  $f(v_2) = \{1,2,4,5\}$ because 
	$\{3,4,5\} \subseteq f(v_2) \cup f(v_3)$.
	Consider a maximal path $Q$ that is $(3,4)$-colored 
	under $f$ and has origin at $v_2$.
	If $v_1$ is not an endpoint of this path, then color $4$
	neither appears at $v_1$ nor at $v_2$
	after an interchange on $Q$;
	if $v_1$ is an endpoint of $Q$, then color $4$
	neither appears at
	$v_2$ nor at $v_3$ after an interchange on $Q$. In both
	cases the conclusion of the lemma holds; a contradiction.
	Hence, $|f(v_1) \cap f(v_3) \setminus \{1,2\}| = 1$.  
	
	Suppose now that (i) holds but not (ii). 
	Without loss of generality we can assume that 
	$$f(v_1) = \{1,2,3,5\}, f(v_2) = \{1,2,4,5\}, f(v_3) = \{1,2,3,4\},$$
	and that $f(x_1v_2)= 5$. If color $3$ does not appear at $x_1$,
	then we may recolor the edge $v_2x_1$ with color $3$, obtaining a coloring
	where $5$ does not appear at $v_2$ or $v_3$;  a contradiction. 
	Similarly,   color
	$3$ must appear at $x_2$. Hence, $\{3,5\} \subseteq f(x_1)$
	and $\{3,4\} \subseteq f(x_2)$.
	
	Let us now prove that   color $4$ appears at $x_1$.
	Consider a maximal $(3,4)$-colored
	path $Q$ with origin $v_1$. If $v_2$ is not an endpoint
	of this path, then by interchanging colors
	on $Q$ we get a coloring
	where $3$ does not appear at $v_1$ or $v_2$.
	So suppose that $v_2$ is an endpoint of $Q$,
	interchange colors on $Q$ and denote the obtained coloring by $f_1$.
	If $4$ does not appear at $x_1$ under $f_1$, then we may recolor
	$v_2x_1$ with color $4$ to obtain a coloring where $5$ does not
	appear at $v_2$ or $v_3$. Hence,
	$4 \in f(x_1)$.
	That color $5$ must appear at $x_2$ under $f$ can be proved similarly by considering
	a maximal $(3,5)$-colored path with origin at $v_3$. 
 Thus $\{3,4,5\} \subseteq f(x_i)$, $i=1,2$.
	
	We now prove statement (b) of Lemma \ref{lem:onesided}.
	By part (a), we may assume that
	condition (i)  hold for the vertices
	$v_1, v_2, v_3$, since otherwise the required result 
	follows. In fact, we will as before
	assume (without loss of generality) 
	that $$f(v_1) = \{1,2,3,5\}, f(v_2) = \{1,2,4,5\}, f(v_3) = \{1,2,3,4\}.$$
	If $f(v_4) = \{1,2,3,4\}$, then the result follows by
	setting $g = f$.
	If $f(v_4) = \{1,2,4,5\}$, then the result follows
	by applying part (a) to the subpath of $P$ with 
	internal  vertices $v_2, v_3, v_4$. So we
	may assume that $f(v_4) = \{1,2,3,5\}$.
	Consider a maximal $(3,4)$-colored path $Q$ with origin at $v_2$.
	Let $f_1$ be a proper $5$-edge coloring obtained 
	from $f$ by interchanging  	colors on $Q$.
	If $v_1$ is not an endpoint of $Q$,  
	then the result follows by setting $g=f_1$ 	because 
	$4\notin f_1(v_1)\cup f_1(v_2)$. Suppose now that $v_1$ is an endpoint of $Q$.
	Then $|f_1(v_2)\cap f_1(v_4)\setminus \{1,2\}|=2$.
 	The result now follows
	by applying part (a) to the subpath of $P$ with internal vertices $v_2,v_3,v_4$.
\end{proof}

\begin{lemma}
\label{lem:x1x2}

	Let $G$ be a 4-regular graph with $\chi'(G)=4$,
	$f$ be a proper $5$-edge coloring of $G$, $h$
	a proper $4$-edge coloring of $G$,
	and let $P=u_1v_1v_2v_3v_4$ be a $(1,2)$-colored path under $f$
	in $G$
	such that $f(u_1v_1)=2$ and $h(u_1v_1)=1$. 
	Assume further that  
	$$f(v_1) = \{1,2,c_1,c_3\}, f(v_2) = \{1,2,c_2,c_3\}, 
	f(v_3) = \{1,2,c_1,c_2\},$$
	where $(c_1,c_2,c_3)$ is a permutation of the set $\{3,4,5\}$. 
	Let  $x_1$ and $x_2$ be the vertices  distinct from $v_1$ and $v_3$
	that are adjacent to $v_2$, where $f(v_2x_1)=c_3, f(v_2x_2)=c_2$. 
	If $u_1 \neq x_2$,  then one of the following holds:

\begin{itemize}	
	\item[(I)] $f(x_1)=\{2,3,4,5\}$ and $f(x_2)=\{1,3,4,5\}$,
	
	\item[(II)] there is a proper $5$-edge coloring $g$  of $G$ which can be obtained from $f$ by a
sequence of interchanges, such that $M(h,1) \cap M(g,1) = M(h,1) \cap M(f,1)$,
	the color $1$ is missing in $g(v_1)$
	and, moreover, if $1 \notin f(u_1)$, then $1 \notin g(u_1)$,

	\item[(III)] there is a proper $5$-edge coloring $g$  of $G$ which can be obtained from $f$ by a
sequence of interchanges on bicolored paths 
	with colors from $\{3,4,5\}$, such that
 one of colors $3,4,5$ is missing
	either in the set $ g(v_1) \cup g(v_2)$ or in $ g(v_2) \cup g(v_3)$.
	\end{itemize}
\end{lemma}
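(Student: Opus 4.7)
The strategy is to assume that conclusion (III) fails and then derive either (I) or (II). Since the path $u_1v_1v_2v_3v_4 = P$ is a $(1,2)$-colored path of length $4$ under $f$ with internal vertices $v_1, v_2, v_3$, Lemma~\ref{lem:onesided}(a) applies directly to it, and its conclusion is verbatim the statement (III) of the present lemma. By the contrapositive, failure of (III) forces both conditions (i) and (ii) of Lemma~\ref{lem:onesided}(a) to hold. The hypothesis on the palettes $f(v_1), f(v_2), f(v_3)$ parametrized by the permutation $(c_1, c_2, c_3)$ of $\{3,4,5\}$ yields condition (i) automatically: one checks that $|f(v_i) \cap f(v_j) \setminus \{1,2\}| = 1$ for each pair $i \neq j$. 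Hence condition (ii) holds, i.e.\ the colors $3, 4, 5$ all appear at both $x_1$ and $x_2$ under $f$. Since $G$ is $4$-regular, $|f(x_i)| = 4$, so $f(x_i) \in \{\{1,3,4,5\}, \{2,3,4,5\}\}$ for $i = 1, 2$.

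If $f(x_1) = \{2,3,4,5\}$ and $f(x_2) = \{1,3,4,5\}$, conclusion (I) holds. Otherwise, at least one of $1 \in f(x_1)$ or $2 \in f(x_2)$ occurs, and I would aim for (II) via a single interchange along a bicolored path. Note that $c_2$ is the unique color in $\{3,4,5\}$ missing from $f(v_1)$, so a Kempe swap on a $(1, c_2)$-component is essentially the only way to remove color $1$ from $v_1$. Let $Q$ be the maximal $(1, c_2)$-colored path in $G_f(1, c_2)$ with endpoint $v_1$; the path begins with the edge $v_1v_2$ (color $1$), then continues to $v_2x_2$ (the unique $c_2$-edge at $v_2$), and extends past $x_2$ precisely when $1 \in f(x_2)$. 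Interchanging the colors on $Q$ produces a proper $5$-edge coloring $g$ with $1 \notin g(v_1)$.

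To complete (II) one must verify two properties of this $g$: (a) $M(h,1) \cap M(g,1) = M(h,1) \cap M(f,1)$, which is equivalent to no edge on $Q$ carrying $h$-color $1$; and (b) if $1 \notin f(u_1)$, then no $c_2$-edge incident to $u_1$ lies on $Q$, so $1 \notin g(u_1)$. The edge $v_1v_2$ is safe for (a) because $h(u_1v_1) = 1$ forces $h(v_1v_2) \neq 1$ by properness of $h$ at $v_1$, and the hypothesis $u_1 \neq x_2$, combined with the fact that $u_1$ is distinct from the interior vertices of $P$, handles the initial segment of $Q$ for (b). The main obstacle is controlling $Q$ once it extends past $x_2$ in the case $1 \in f(x_2)$: there $Q$ could in principle traverse an edge of $h$-color $1$, spoiling (a), or reach $u_1$ via a $c_2$-edge, spoiling (b). To handle this I would argue that if such an obstruction occurs along $Q$, then the palette constraints $\{3,4,5\} \subseteq f(x_i)$ together with the alternating structure of the $(1,c_2)$-matching make available an auxiliary bicolored interchange using colors only from $\{3,4,5\}$ that produces the configuration required by (III), contradicting our standing assumption that (III) fails. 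Carrying out this reduction, and being meticulous about whether $Q$ is a path or a cycle and about the alternative position of $u_1$, is the technically demanding portion of the proof.
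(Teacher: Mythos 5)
Your reduction to Lemma \ref{lem:onesided}(a) is exactly right: condition (i) is automatic from the prescribed palettes of $v_1,v_2,v_3$, so if (III) fails then colors $3,4,5$ appear at both $x_1$ and $x_2$, whence $f(x_i)\in\bigl\{\{1,3,4,5\},\{2,3,4,5\}\bigr\}$. Your treatment of the subcase $1\notin f(x_2)$ also matches the paper: there the maximal $(1,c_2)$-colored path from $v_1$ is just $v_1v_2x_2$ (this is where $u_1\neq x_2$ is used), and a single interchange gives (II).

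The gap is in the remaining case $f(x_1)=f(x_2)=\{1,3,4,5\}$, which is the whole substance of the lemma. There your path $Q$ continues past $x_2$ and, as you concede, may contain an $h$-correct edge of color $1$ or terminate at $u_1$. Your proposed repair --- that such an obstruction would force configuration (III) via some auxiliary $\{3,4,5\}$-interchange --- cannot work: the obstruction lives on a $(1,c_2)$-alternating path arbitrarily far from $v_1,v_2,v_3,x_1,x_2$, while (III) concerns only the palettes of $v_1,v_2,v_3$, which are unaffected by what $Q$ does beyond $x_2$; no interchange confined to colors $3,4,5$ near $v_2$ can control it. The premise that a $(1,c_2)$-Kempe swap is ``essentially the only way'' to remove color $1$ from $v_1$ is what leads you astray. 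The paper never touches the $(1,c_2)$-component in this case: since $2\notin f(x_1)$ and $2\notin f(x_2)$, the sequence $(v_2v_1,v_2x_2,v_2v_3,v_2x_1)$ is an $f$-fan at $v_2$ and $c_1$ is missing at $v_2$; a preliminary interchange on a maximal $(c_1,2)$-colored path with origin $x_1$ (or $x_2$, depending on which of $v_3,x_2$ that path meets) makes this fan, or its prefix $(v_2v_1,v_2x_2)$, saturated, and a downshift then recolors $v_1v_2$ from $1$ to $c_2$ while every other recolored edge is incident to $v_2$ and receives a color different from $1$. Because the auxiliary interchange uses only colors $c_1$ and $2$, and $h(v_1v_2)\neq 1$, both the equality $M(h,1)\cap M(g,1)=M(h,1)\cap M(f,1)$ and the clause about $u_1$ follow. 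You would need to supply this (or an equivalent) local recoloring mechanism to close the case.
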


\begin{proof}
	Suppose that (III) does not hold. Then, by
 	Lemma \ref{lem:onesided},
	colors $3,4,5$ appear at both $x_1$ and $x_2$ under $f$. 

	First we show that $f(x_2)=\{1,3,4,5\}$ or (II) holds. 
	Suppose that $1\notin f(x_2)$. Then
	$v_1v_2x_2$ is a maximal $(1,c_2)$-colored path under $f$.
	Since, $u \neq x_2$ we may interchange colors on this path
	to obtain a proper $5$-edge coloring 
	$g$ of $G$ 
	for which (II) holds.
	Hence we may assume that  $1\in f(x_2)$,
  that is, $f(x_2)=\{1,3,4,5\}$.
	
	Now we show  that $f(x_1)=\{2,3,4,5\}$ or (II) holds.
	Suppose that $2\notin f(x_1)$.
	Then the sequence $(v_2v_1,v_2x_2,v_2v_3,v_2x_1)$
	is an $f$-fan at $v_2$ and color $2$ does not appear at
	$x_1$ and  $x_2$. Note also that $c_1$ does not appear at $v_2$.
	Consider a maximal path $Q$ that is $(c_1,2)$-colored under $f$ and has origin
	at $x_1$. 

	If $\{v_3,x_2\} \cap V(Q) =\emptyset$, 
	then we interchange colors on $Q$
	and obtain a proper coloring that we denote by $f_1$.
	The sequence $(v_2v_1,v_2x_2,v_2v_3,v_2x_1)$
	is a saturated $f_1$-fan at $v_2$ and we may downshift
	to obtain a proper $5$-edge coloring $g$ under which color $1$
	does not appear at $v_1$. Moreover,
	if $1 \notin f(u_1)$, then $1 \notin g(u_1)$; that is,
	(II) holds.
	
	Suppose now that $v_3 \in V(Q)$. Then $x_2 \notin V(Q)$.
	Consider a maximal path $Q'$ that is $(c_1,2)$-colored under $f$ and has
	origin $x_2$. Clearly, $Q$ and $Q'$ are disjoint
	and by interchanging colors on $Q'$ we get a coloring $f_1$
	where $(v_2v_1,v_2x_2)$ is a saturated $f_1$-fan at $v_2$.
	By downshifting on this fan 
	we obtain a coloring $g$ for which (II) holds.
	
	Suppose now that $x_2 \in V(Q)$. Then $v_3 \notin V(Q)$
	and by interchanging colors on $Q$ we get a coloring $f_1$
	where $(v_2v_1,v_2x_2)$ is a saturated $f_1$-fan.
	Again, downshifting on this fan yields that (II) holds.
\end{proof}	

	Let $G$ be a graph with $\chi'(G) = \Delta(G)=4$
	and $h$ a proper $4$-edge coloring of $G$.
	For an arbitrary  proper $5$-edge coloring $\varphi$ of $G$
	we say that an edge $e$ is {\em $h$-correct under $\varphi$}
	(or just {\em correct under $\varphi$}) if
	$\varphi(e) = h(e)$.
	By the {\em distance} between two edges $e$ and $e'$ on a path
	$P$, we mean the number of edges between $e$ and $e'$ on $P$.
	
\begin{lemma}
\label{lem:reduction}
	Let $G$ be a graph with $\chi'(G) = \Delta(G)=4$,
	$h$ be a proper $4$-edge coloring of $G$ and $f$ 
	a proper $5$-edge coloring
	of $G$. Furthermore, let $P$ be a maximal path or cycle
	that is $(1,2)$-colored under $f$
	and which
	contains an edge $xy$ such that $f(xy) =2$ and $h(xy) = 1$.
	If there is no $h$-correct edge of color $1$ under $f$
	at distance at most $2$ from $xy$ on $P$, then there
	is a proper $5$-edge coloring $f'$ of $G$  such that $f'$ 
	can be obtained from 
	$f$ by a sequence of interchanges and
	$|M(f',1) \cap M(h,1)| > |M(f,1) \cap M(h,1)|$.
\end{lemma}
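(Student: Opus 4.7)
The plan is to produce $f'$ by interchanging colors $1$ and $2$ on a subpiece $P''$ of $P$ containing $xy$ such that every color-$1$ edge of $P''$ lies within distance $2$ of $xy$ on $P$. Under the hypothesis the color-$1$ edges of $P''$ are then not $h$-correct, so the interchange turns $xy$ into an $h$-correct color-$1$ edge without losing any other correct color-$1$ edge in $P''$, and since it affects only edges of $P''$ we obtain $|M(f',1)\cap M(h,1)|\ge |M(f,1)\cap M(h,1)|+1$.

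To isolate such a $P''$ as a maximal $(1,2)$-component, I use Lemma~\ref{lem:onesided}(b) to \emph{break} $P$ on one or both sides of $xy$. Writing $P=\dots u_{j-5}u_{j-4}u_{j-3}u_{j-2}u_{j-1}u_ju_{j+1}u_{j+2}u_{j+3}u_{j+4}u_{j+5}u_{j+6}\dots$ with $xy=u_ju_{j+1}$, the subpath $P_L=u_{j-5}u_{j-4}u_{j-3}u_{j-2}u_{j-1}u_j$ (available when the left side of $P$ is long enough) is a $(1,2)$-colored path of length $5$. Applying Lemma~\ref{lem:onesided}(b) to $P_L$ yields, via interchanges on $(3,4,5)$-bicolored paths (which preserve the $(1,2)$-subgraph), a coloring in which some $c\in\{3,4,5\}$ is missing from both ends of one of the internal edges $u_{j-4}u_{j-3}$, $u_{j-3}u_{j-2}$, $u_{j-2}u_{j-1}$; recoloring that edge with $c$ is a one-edge Kempe change that breaks $P$ on the left. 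The three candidate break-edges have colors $(2,1,2)$ and distances $(3,2,1)$ from $xy$, so the only color-$1$ candidate is at distance $2$ and hence non-$h$-correct by hypothesis, and in every subcase the break itself preserves $|M(\cdot,1)\cap M(h,1)|$. The symmetric subpath $P_R=u_{j+1}u_{j+2}u_{j+3}u_{j+4}u_{j+5}u_{j+6}$ plays the analogous role on the right.

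The argument splits by the shape of $P$. If $P$ is a path with $j\le 4$ and $m-j\le 5$, then all color-$1$ edges of $P$ already lie within distance $2$ of $xy$ and we interchange on $P$ directly. If only one end of $P$ is far from $xy$, we break on that side only. If both ends are far ($j\ge 5$ and $m-j\ge 6$), we apply Lemma~\ref{lem:onesided}(b) successively to $P_L$ and $P_R$; this is valid because the $(3,4,5)$-interchanges do not affect the $(1,2)$-subgraph, so $P_R$ remains a length-$5$ $(1,2)$-subpath after the first break. For cycles of length at most $8$ all color-$1$ edges lie within distance $2$ of $xy$ and a direct interchange works; for cycles of length at least $10$ we first break the cycle into a path by applying Lemma~\ref{lem:onesided}(b) to the length-$5$ subpath adjacent to $xy$ (whose inner edges again have colors $(2,1,2)$ at distances $(3,2,1)$), and then fall into one of the path cases above. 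The main obstacle is the case-by-case bookkeeping across the three possible outcomes of each application of Lemma~\ref{lem:onesided}(b); in every branch the resulting $P''$ contains only color-$1$ edges from the safe set $\{u_{j-1}u_j,\ u_{j+1}u_{j+2},\ u_{j-3}u_{j-2},\ u_{j+3}u_{j+4}\}$ (all non-$h$-correct by hypothesis), so the final interchange on $P''$ is a single Kempe change yielding the required strict increase.
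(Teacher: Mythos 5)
Your proposal is correct and follows essentially the same route as the paper: handle the short cases by a direct $(1,2)$-interchange, and otherwise use Lemma \ref{lem:onesided}(b) to ``break'' $P$ at one of the three candidate edges near $xy$ (whose only color-$1$ candidate sits at distance $2$ and is therefore not $h$-correct), on one or both sides, before interchanging on the remaining piece containing $xy$. The only cosmetic difference is bookkeeping: the paper breaks the far side first so as to reduce the two-sided and cycle cases to its one-sided Case 1, whereas you break both sides directly, which is equally valid since the $(3,4,5)$-interchanges and the single recolored edge do not disturb the other side's $(1,2)$-structure.
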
	

\begin{proof}
	Suppose first that $P$ contains no $h$-correct
	edge colored $1$.
	Then we simply make an interchange on $P$ to
	obtain the required coloring $f'$.
	
	Suppose now that $P$ contains some $h$-correct edge colored $1$.
	This implies that the length of $P$ is at least $6$.
	
\bigskip	

%%%%%%%%%%%%%%%%%%%%%%%%%%%%%%%%%%%%%%%%%%%%%%%

\noindent	
{\bf Case 1.}  $P$ is a path and the distance between $xy$ and the first or the last edge of $P$ is at  most 3.

\medskip
\noindent
Let $P = a_l a_{l-1} \dots a_1 x y b_1, \dots b_k$, where $l \leq 4$ or $k\leq 4$.
	Without loss of generality we assume that $l\leq 4$. The conditions imply that the subpath $a_l a_{l-1} \dots x$
	contains no $h$-correct edge colored $1$ under $f$, 
	so we must have $k \geq 5$.
	Consider the path $y b_1 b_2 b_3 b_4 b_5$. By Lemma 
	\ref{lem:onesided} (b),
	there is a sequence of
	interchanges on bicolored paths with colors from $\{3,4,5\}$
	yielding a proper $5$-edge coloring $g$ of $G$ from $f$, such that
	for some color $c \in \{3,4,5\}$ and some $i \in \{1,2,3\}$,
	we have $c \notin g(b_i) \cup g(b_{i+1})$.
	Note that $b_i b_{i+1}$ is not both $h$-correct and colored $1$.
	Recolor the edge $b_i b_{i+1}$ with the color $c$
	and denote the obtained coloring by $g_1$. Then the
	path $Q = a_l a_{l-1} \dots a_1 x y b_1 \dots b_i$ is
	a maximal $(1,2)$-colored path under $g_1$ and it
	contains no correct edge colored $1$, so the desired result follows
	by making an interchange on $Q$.

\bigskip	

\noindent	
{\bf Case 2.}  $P$ is a path and the distance between $xy$ and each of the first and last edges 
of $P$ is at least 4.

\medskip
\noindent
Let $P = a_l a_{l-1} \dots a_1 x y b_1, \dots b_k$,
	where $l,k \geq 5$. Consider the path $y b_1 b_2 b_3 b_4 b_5$. By Lemma 
	\ref{lem:onesided} (b),
	there is a sequence of
	interchanges on bicolored paths with colors from $\{3,4,5\}$
	yielding a proper $5$-edge coloring $g$ of $G$ from $f$, such that
	for some color $c \in \{3,4,5\}$ and  some $i \in \{1,2,3\}$,
	we have $c \notin g(b_i) \cup g(b_{i+1})$.
	Recolor the edge $b_i b_{i+1}$ with the color $c$
	and denote the obtained coloring by $g_1$.
	Then the 
	maximal path $Q$  that is
	$(1,2)$-colored under $g_1$ and contains  $xy$,
	satisfies the condition of Case 1. Therefore, the coloring $g_1$ can be transformed
to a required coloring $f'$ by a sequence of interchanges.
	Thus, the lemma is true in Case 2.

\bigskip	

\noindent	
{\bf Case 3.} $P$ is a cycle:

\medskip
\noindent
	Suppose that $P=a_k a_{k-1} \dots a_1 x y b_1 \dots b_k a_k$.
	If $P$ has length at most $8$, then it contains no
	$h$-correct edge colored $1$, which contradicts our assumption;
	so assume that $P$ has length at least $10$.
	If $P$ has length $10$, then there is
	at most one $h$-correct edge colored $1$ on $P$, 
	namely $b_4a_4$. Suppose that $b_4a_4$ is $h$-correct.
	By applying Lemma 
	\ref{lem:onesided} (b) to the path $y b_1 b_2 b_3 b_4 a_4$,
	and proceeding as above we may reduce this situation to Case
	1.
	
	If, on the
	other hand, $P$ has length at least $12$,
	then there is a path $y b_1 b_2 b_3 b_4 b_5$
	lying on $P$, and we may proceed exactly as in Case 2
	for reducing this case to Case 1.
\end{proof}

%%%%%%%%%%%%%%%%%%%%%%%%%%%%%%%%%%%%%%%%%%%%%%%%%

	\section{Proofs of Theorem \ref{th:connect} and Theorem \ref{th:connect1}}

In this section we prove Theorem \ref{th:connect} and Theorem \ref{th:connect1}.
First we present a family 
 of graphs permitting 
	an affirmative answer to Problem \ref{prob:vizing}.

	Denote by $G_\Delta$ the graph induced by the vertices of maximum
	degree in a graph $G$.
	It is known that if $G_\Delta$ is acyclic, then
	$G$ is a Class $1$ graph \cite{Fournier}.
	In fact, Fournier \cite{Fournier} 
	(see also \cite{BergeFournier})
		described an algorithm for sequentially 
	coloring the edges properly of such a graph using $\Delta(G)$ colors.

	Here, we use a similar  algorithm to prove
	the following proposition:

\begin{proposition}
\label{th:main2}
	Let $G$ be a graph where
	$G_\Delta$ is acyclic.  
	Then any proper $t$-edge coloring  of $G$ with $t > \Delta(G)$ can be transformed
	into a proper $\Delta(G)$-edge coloring by a sequence of interchanges.
\end{proposition}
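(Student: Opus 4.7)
The plan is: by Theorem A, any proper $t$-edge coloring with $t \geq \Delta(G)+2$ can already be transformed via interchanges into a proper $(\Delta(G)+1)$-edge coloring, so I only need to handle the case $t = \Delta(G)+1$. For that, I would show that any proper $(\Delta(G)+1)$-edge coloring can be transformed via interchanges into one using at most $\Delta(G)$ colors, by iteratively reducing the number of edges colored $\Delta(G)+1$.

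Fix an edge $e = uv$ with $f(e) = \Delta(G)+1$. Since $d_G(u), d_G(v) \leq \Delta(G)$, each endpoint misses at least one color in $\{1,\dots,\Delta(G)\}$. If some color $\alpha \in \{1,\dots,\Delta(G)\}$ is missing at both $u$ and $v$, then $\{e\}$ is an isolated component of $G_f(\alpha,\Delta(G)+1)$, and a single interchange on that component recolors $e$ with $\alpha$, strictly decreasing $|M(f,\Delta(G)+1)|$.

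Otherwise, I would build a maximal $f$-fan $(e_1,\dots,e_n)$ at $v$ with $e_1 = uv$, mimicking Fournier's classical construction. If the fan is saturated, the associated downshift, which (as noted in the preliminaries) is realizable as a sequence of interchanges, recolors $e$ with a color in $\{1,\dots,\Delta(G)\}$. If the fan is maximal but not saturated, the color $\beta$ missing at the last fan vertex $u_n$ must equal $f(e_j)$ for some $j$. Letting $\alpha$ be a color missing at $v$, I would then perform a Kempe interchange on a suitable component of $G_f(\alpha,\beta)$---namely the one containing either $u_n$ or $u_{j-1}$, but not $v$, selected by the standard parity argument from Vizing's proof. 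The resulting coloring either admits a saturated fan on the same vertex set or yields a shorter fan that saturates, and in either situation one further downshift removes the edge $e$ from $M(f,\Delta(G)+1)$.

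The acyclicity of $G_\Delta$ enters precisely in this Kempe-chain subcase: if every vertex visited by the fan and the subsequent Kempe interchanges lay in $V(G_\Delta)$, the construction would trace out a closed walk in $G_\Delta$, contradicting acyclicity. Hence some vertex encountered has degree strictly less than $\Delta(G)$, supplying the missing color required for saturation. I expect this Kempe-chain subcase to be the main obstacle: the argument there essentially replays Fournier's proof that a graph with $G_\Delta$ acyclic admits a proper $\Delta(G)$-edge coloring, but now each recoloring must be expressed explicitly as a sequence of interchanges, and one must verify that none of these interchanges inadvertently increases the number of edges colored $\Delta(G)+1$.
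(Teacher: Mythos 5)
Your overall architecture matches the paper's: reduce to $t=\Delta(G)+1$ via Theorem A, then iteratively decrease $|M(f,\Delta+1)|$ using fans, downshifts and Kempe chains. But the heart of the matter --- where and how the acyclicity of $G_\Delta$ is used --- is not handled correctly. The genuinely hard case is when the maximal fan at $v$ (starting with the $(\Delta+1)$-colored edge $e_1=uv$) terminates at a vertex $u_k$ whose \emph{only} missing color is $\Delta+1$; then no color $c\in\{1,\dots,\Delta\}$ is available for saturation, and the classical Vizing/Fournier Kempe-chain step does not apply as stated (moreover, an interchange on an $(\alpha,\Delta+1)$-chain can increase $|M(\cdot,\Delta+1)|$ by one, which your induction cannot absorb --- you flag this but do not resolve it). Your proposed resolution --- that ``the construction would trace out a closed walk in $G_\Delta$, contradicting acyclicity, hence some vertex encountered has degree strictly less than $\Delta(G)$, supplying the missing color required for saturation'' --- is a non sequitur. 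The fan vertices are merely neighbours of $v$ and need not be pairwise adjacent, and the Kempe chains wander through all of $G$, so nothing forces the visited vertices to form a walk in $G_\Delta$; and even if some vertex on a chain has degree less than $\Delta$, that does not saturate the fan at $v$.

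What the paper actually does in the stuck case is constructive rather than by contradiction: since $u_k$ misses only $\Delta+1$ it lies in $V(G_\Delta)$, so $vu_k$ is an edge of $G_\Delta$ once $d_G(v)=\Delta$. By a sequence of interchanges on maximal bicolored paths each of which \emph{begins} with an edge colored $\Delta+1$ (and which therefore never increase $|M(\cdot,\Delta+1)|$), the $(\Delta+1)$-colored edge is shifted onto $vu_k$, and the fan construction is restarted at $u_k$. Iterating produces a sequence of \emph{distinct} vertices $a_0,a_1,\dots$ tracing a path in $G_\Delta$ --- distinctness is precisely where acyclicity is invoked --- which must therefore terminate at a vertex of degree $1$ in $G_\Delta$. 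At such a leaf every neighbour but one has degree at most $\Delta-1$, hence misses at least one color of $\{1,\dots,\Delta\}$, so the fan can be chosen with no vertex associated with $\Delta+1$ and the saturation/downshift argument finally goes through. This vertex-chasing procedure along the forest $G_\Delta$, together with the bookkeeping that keeps $|M(\cdot,\Delta+1)|$ from growing along the way, is the missing content of your proof.
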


\begin{proof}
	The major parts of the
	proof is similar to the proof of Vizing's theorem on the chromatic intex.
	Let $G$ be a graph with maximum degree $\Delta$ such that
	$G_\Delta$ is acyclic, $t$ an integer satisfying $t > \Delta$,
	and suppose that $f$ is a proper $t$-edge coloring of $G$.
	
	Taking Theorem A into consideration, it is sufficient
	to prove the proposition in the case $t = \Delta+1$.
	%, because if $t > \Delta+1$, 
	%then, by  Vizing's algorithm, $f$ can
	%be transformed into a proper $(\Delta+1)$-edge coloring
	%using only interchanges (see e.g. \cite{West});
	%the original problem now is reduced
	%to the problem of transforming the obtained
	%proper $(\Delta+1)$-edge coloring to 
	%a proper $\Delta$-edge coloring.
	%Thus we may assume that $t = \Delta+1$.

	Suppose that there is at least one edge $e$
	colored $\Delta +1$ in $G$. We will show that using 
	only interchanges we can transform
	$f$ to  another  proper $(\Delta+1)$-edge coloring with fewer edges
	colored $\Delta+1$.

\bigskip
\noindent
{\bf Case A.} There is an edge $e=vu_1 \in E(G_\Delta)$
with $f(e)=\Delta+1$ and  $d_{G_\Delta}(v) =1$:

\medskip
\noindent
		Since every neighbor $x \not= u_1$ of $v$ has degree at most $\Delta-1$,
	there is a maximal fan $F=(e_1,e_2, \dots, e_k)$
	at $v$ with $e_i = vu_i$ and with no vertex
	in $\{u_1, \dots, u_k\}$ being associated with color $\Delta+1$.
	Suppose that $f(e_i) =c_i, i=1,\dots,k$, where $c_1=\Delta+1$, 
	and that color $c_{k+1}\not=\Delta+1$ does not appear at $u_k$.
	
	Since $F$ is maximal, either 
	\begin{itemize}

	\item[(i)] there is no edge incident to $v$ colored $c_{k+1}$, or 
	
	\item[(ii)] $c_{k+1}=f(e_{j+1})$ for some $j, 1 \leq j<k-1$, that is,
	$c_{k+1}=c_{j+1}$ and $c_{k+1}$  does not appear at $u_j$.
	\end{itemize}
	In the 
	case (i) we downshift on $F$, and since no vertex in $\{u_1, \dots, u_k\}$ is
	associated with color $\Delta+1$, we obtain a proper $(\Delta+1)$-edge coloring
	with fewer edges colored $\Delta+1$.
	
	Suppose now that (ii) holds. Choose a color $c_0$ that does not appear at $v$.
	If $c_0$ does not appear also at $u_k$, then we replace $c_{k+1}$ 
	with $c_0$ and 
	proceed as in the case (i).
	Otherwise, we consider a maximal $(c_0,c_{k+1})$-colored path $Q$ 
	with origin at $u_k$.
	
	If $u_{j+1}$ is on $Q$, then $v$ is an endpoint of $Q$. 
	We interchange the colors
	on $Q$ and  denote the obtained coloring by $f_1$. 
	The color $c_{k+1}$ does not appear 
	now at vertices $v$ and $u_j$, so we may
	downshift on the saturated $f_1$-fan $(e_1,e_2,\dots, e_j)$
	to obtain the desired result.
	
	If $u_j$ is on $Q$, then this is an endpoint of $Q$.
	We interchange the colors on $Q$ and  obtain a coloring
	 $f_1$ where the color $c_0$ does not appear at vertices $v$ and $u_j$. 
	Now we downshift on the saturated $f_1$-fan $(e_1,\dots, e_j)$.
	
	If $\{u_{j-1}, u_j\} \cap V(Q) = \emptyset$, then we interchange colors on 
	$Q$ and then downshift on $(e_1, \dots, e_k)$.

\bigskip
\noindent
{\bf Case B.}
	There is no edge $e=uv \in E(G_\Delta)$ of color  $\Delta+1$ 
	with $d_{G_\Delta}(u) =1$
	or $d_{G_\Delta}(v) =1$.

\medskip
\noindent
	The following cases are possible:

\bigskip
\noindent
{\bf Case B.1} $E(G_\Delta) \cap M(f,\Delta+1) \neq \emptyset$:	

\medskip
\noindent
	Let $e = uv \in E(G_\Delta) \cap M(f,\Delta+1)$.
	We will describe an algorithm for constructing a sequence of distinct vertices
	$a_0, a_1,\dots,a_n$ in $G$ 
	such that
	$a_i \in V(G_\Delta)$, $i=0,\dots,n$, and
	$a_i$ and $a_{i+1}$ are adjacent in $G$.
	The algorithm will also construct a related 
	sequence $g_1, \dots g_{n-1}$
	of proper $(\Delta +1)$-edge colorings
	of $G$, such that the edge $a_ia_{i+1}$ is
	colored $\Delta+1$ under $g_i$.
	Additionally $|M(g_i,\Delta+1)| \leq |M(f,\Delta+1)|$
	for each $i \in \{1,\dots,n-1\}$.

\bigskip	
	
\noindent
{\bf Algorithm}

\bigskip	
\begin{flushleft}

\begin{description}	
	
	\item[Step $1$:] We first set $g_1 = f$ and $a_0 = u$
	and $ a_1=v$.

	\item[Step $i \, \, (i \geq 2)$:]
	Suppose that we have constructed the sequence
	$a_0,a_1,\dots,a_{i-1}$ of distinct vertices in $G$,
	such that $a_{j-1}$ and $a_{j}$ are adjacent, $j=1,\dots,i-1$,
	and a sequence of proper $(\Delta+1)$-edge colorings
	$g_1,\dots g_{i-1}$, satisfying that 
	$|M(g_j,\Delta+1)| \leq |M(f,\Delta+1)|$ 
	and $g_{j}(a_{j-1}a_{j}) = \Delta +1$, $j=1,\dots,i-1$.
	
	If $d_{G_\Delta}(a_{i-1}) = 1$, then  Stop. 
	The coloring $g_{i-1}$ satisfies the conditions in  Case A
	and we can   transform $g_{i-1}$ to a 
	proper $(\Delta+1)$-edge coloring
	with fewer edges colored $\Delta+1$.
	
	If $d_{G_\Delta}(a_{i-1})>1$, we
	consider a maximal $g_{i-1}$-fan 
	$F =(e_1,e_2, \dots, e_k)$
	at $a_{i-1}$ with $e_1=a_{i-1}a_{i-2}$ and $e_j = a_{i-1}u_j$, 
	for $j=2, \dots,k$.
	If there is some
	color $c_{k+1} \neq \Delta+1$ missing at $u_k$ under $g_{i-1}$, 
	then by 	proceeding
	similarly as in Case A we may 
	transform $g_{i-1}$ to
	a proper $(\Delta+1)$-edge coloring $\alpha$
	using only interchanges, where $\alpha$ satisfies
	$|M(\alpha,\Delta+1)| < |M(f,\Delta+1)|$.
	
	So suppose that $\Delta+1$ is the only color that does not appear at $u_k$
	under $g_{i-1}$.
	Note that this implies that $u_k \in V(G_\Delta)$
	Additionally, $u_k$ is distinct from $a_0,a_1,\dots, a_{i-1}$, because
	$u_k \neq a_{i-2}$ and
	$G_\Delta$ is acyclic.

	We will now make a series of interchanges from $g_{i-1}$
	to obtain $g_i$.
	Suppose that $g_{i-1}(e_j) = c_j$, $j=1,\dots,k$, where $c_1=\Delta+1$.
	We first interchange colors on the maximal
	$(\Delta+1,c_2)$-colored path with origin at $a_{i-2}$
	and denote the obtained coloring by $f_1$.
	Since the first edge of this path is colored
	$\Delta +1$, 
	$|M(f_1,\Delta+1)| \leq |M(g_{i-1},\Delta+1)|$.
	Now, for each $j =2,\dots,k-1$,
	from $f_{i-1}$ we construct the coloring $f_{i}$ by interchanging
	colors on the maximal $(\Delta +1,c_{i+1})$-colored path
	with origin at $u_i$. All these paths begin by an edge colored
	$\Delta +1$, so we do not increase the number of edges colored
	$\Delta+1$ in $G$ by making these interchanges.
	Note that  
	$f_{k-1}(a_{i-1}u_k)= \Delta+1$.
	We set $g_i=f_{k-1}$ and $a_i = u_k$, and go to Step $(i+1)$.

\end{description}
\end{flushleft}
	
	Since $G_\Delta$ is acyclic, the algorithm above will stop after
	a finite number of steps, when we reach a vertex of degree $1$
	in $G_\Delta$. Moreover, the algorithm
	produces a proper $(\Delta+1)$-edge coloring $g$ of $G$
	such that $|M(g,\Delta+1)| \leq |M(f,\Delta+1)|$,
	and there is an edge $e \in E(G_\Delta) \cap M(g,\Delta+1)$ 
	which is incident to
	a vertex of degree $1$ in $G_\Delta$.
	Hence, this case has been reduced to Case A, and thus we may,
	using only interchanges,
	from $g$ construct
	a proper $(\Delta+1)$-edge coloring
	with fewer edges colored $\Delta+1$.

\bigskip
\noindent
{\bf Case B.2} $E(G_\Delta) \cap M(f,\Delta+1) = \emptyset$
but some edge colored $\Delta+1$ is incident to a vertex of degree $\Delta$:

\medskip
\noindent
	Let $e_1 = u_1v$ be an edge colored $\Delta +1$
	and suppose that $d_G(v) = \Delta$.
	We consider a maximal fan 
	$F =(e_1, e_2, \dots, e_k)$,
	at $v$ with $e_i = vu_i$, for $i=1,\dots,k$. If there
	is a color $c \neq \Delta+1$ missing at $u_k$, then
	we proceed as in Case A 
	and construct a proper $(\Delta+1)$-edge coloring $\alpha$ from $f$ via interchanges,
	such that $|M(\alpha,\Delta+1)| < |M(f,\Delta+1)|$.
	
	So suppose $\Delta+1$ is the only color missing at $u_k$ under $f$.
	This implies that $u_k \in V(G_\Delta)$.
	We may now proceed exactly as described in Step $i$ of the algorithm above:
	by making a series of interchanges on maximal bicolored paths (where $\Delta+1$
	is one of the colors on every such path),
	we obtain a proper $(\Delta+1)$-edge coloring $g$ of $G$
	such that $|M(g,\Delta+1)| \leq |M(f,\Delta+1)|$
	and $g(vu_k) =\Delta+1$; that is,
	there is an edge colored $\Delta +1$ under $g$ both ends of which have
	degree $\Delta$. Then the edge $e=vu_k$ satisfies the conditions  of 
	Case A or Case B.1; as before, we can construct a
	proper $(\Delta+1)$-edge coloring
	with fewer edges colored $\Delta+1$ than under $g$.

\bigskip
\noindent
{\bf Case B.3} $E(G_\Delta) \cap M(f,\Delta+1) = \emptyset$ and
no edge colored $\Delta+1$ is incident to a vertex of degree $\Delta$:

\medskip
\noindent
	Suppose that $e_1 = u_1v$ is colored $\Delta +1$ under $f$,
	and consider a maximal fan 
	$F =(e_1, e_2, \dots, e_k)$
	at $v$, where $e_i = vu_i$, for $i=1,\dots,k$. 
	If there
	is a color $c \neq \Delta+1$ missing at $u_k$, then
	we proceed as in Case A 
	and construct a proper $(\Delta+1)$-edge coloring 
	$\alpha$ from $f$ via 	interchanges,
	such that $|M(\alpha,\Delta+1)| < |M(f,\Delta+1)|$.
	
	So assume that
	$\Delta+1$ is the only color missing at $u_k$.
	Then $u_k \in V(G_\Delta)$ and
	we may now proceed as in
	Step $i$ of the algorithm above for obtaining
	a proper $(\Delta+1)$-edge coloring
	$g$ of $G$ from $f$, such that
	$|M(g,\Delta+1)| \leq |M(f,\Delta+1)|$ and there is an
	edge $e$ colored $\Delta +1$ under $g$ such that one end of $e$
	is $u_k$, and thus
	has degree $\Delta$. Hence, this case has been reduced to Case B.2.
	This completes	the proof of Proposition \ref{th:main2}.
\end{proof}

\begin{proof}
[{\bf Proof of Theorem	\ref{th:connect}.}]

 	If for every graph $G$ 
 	with $\Delta(G) \leq k$
 	all proper $(\chi'(G)+1)$-edge colorings
 	of $G$ are Kempe equivalent, then,
	using Theorem A, it is not difficult to show
	that every Class 1 graph $H$ with $\Delta(H) \leq k$
	has the Vizing property.

	Conversely, assume  that  all Class 1 graphs with 
	maximum degree at most $k$ have 	the Vizing property.
	We will prove  that for each graph $G$ with $\Delta(G)\leq k$ all proper 
	$(\chi'(G)+1)$-edge colorings of $G$ are Kempe equivalent. 
	The proof is by induction on $k$.

	The proposition is evident  for $k=2$. Suppose that the 
	proposition  is true for all integers less than $k$,
 	and let $G$ be an arbitrary graph with $\Delta(G)=k$ and $\chi'(G)=\chi'$.
	Consider  a proper $\chi'$-edge coloring $h$ of $G$.
	We assume that $M(h,\chi')$ is a maximal matching of $G$. 
	Otherwise we can 	consider instead of $h$ another proper 
	$\chi'$-edge coloring which can be obtained from $h$   
	by a sequence of interchanges as follows: if there is an 
	edge $xy$ such that the color $\chi'$ is missing at $x$ and at $y$, 
	then recolor the edge $xy$ with color $\chi'$. 
	We repeat this procedure until we obtain a proper 
	$\chi'$-edge coloring where 	the edges colored $\chi'$ form a maximal matching.

	In order to show that any two proper $(\chi'+1)$-edge 
	colorings of $G$ are Kempe equivalent
 	it is sufficient to show that any proper $(\chi'+1)$-edge coloring  
 	of $G$ can be transformed to $h$
 	by a sequence of interchanges. 

	Let $\varphi$ be a proper $(\chi'+1)$-edge coloring of $G$.
	First we   transform $\varphi$ by a sequence of interchanges 
	to a proper 	$\chi'$-edge coloring. 
	In the case when  $G$ is a Class 1 graph it is possible because 
	$G$ has the Vizing property, and in the case when
	$G$ is a Class 2 graph it is possible by Theorem A. 
	The obtained coloring we denote by $\varphi_1$.

	Then the coloring $\varphi_1$ can be transformed  to a proper
	$(\chi'+1)$-edge coloring  by sequentially recoloring the edges in the matching  	$M(h,\chi')$  with color $\chi'+1$. Clearly,  
	the obtained proper 	$(\chi'+1)$-coloring $f$ satisfies the 
	condition $M(h,\chi')=M(f,\chi'+1)$. Furthermore,
	$f$ is obtained from $\varphi$ by a sequence of interchanges.

	Consider the graph $G_1=G-M(h,\chi')$.  
	The proper $\chi'(G)$-edge coloring $h$ of $G$ induces a proper 
	$(\chi'(G)-1)$-edge  coloring $h'$ of $G_1$ and the proper 
	$(\chi'(G)+1)$-edge 	coloring $f$ of $G$ induces a proper 
	$\chi'(G)$-edge  coloring $f'$ of $G_1$. Clearly, $\chi'(G_1)=\chi'(G)-1$.
	Therefore, $h'$ is a proper $\chi'(G_1)$-edge coloring and $f'$ is a proper
	$(\chi'(G_1)+1)$-edge coloring of $G_1$.

	If $\Delta(G_1)=k-1$ then, by the induction hypothesis, $f'$ and $h'$ are Kempe 	equivalent.
	Then the coloring $f$  of $G$ can be transformed to the 
	coloring $h$ by a 		sequence of interchanges.

	Suppose now that   $\Delta(G_1)=k$.  This is possible 
	only if  $G$ is  a Class 2 graph and $G_1$ is a Class 1 graph, 
	that is, if $\chi'(G)=k+1$ and $\chi'(G_1)=k$.
	Since $M(h,k+1)$ 
	is a maximal matching in $G$, every two vertices of degree 
	$k$ in $G_1$ are  non-adjacent.  
	By Proposition  \ref{th:main2} the coloring $f'$ 
	can be transformed by interchanges to a proper
	$(k+1)$-edge coloring $f''$ such that $M(f'',k+1)=\emptyset$.  
	Let $g$ be the   	proper 
	$(k+1)$-edge coloring obtained from $f''$  by sequentially
 	recoloring the edges in the set
	$E_0=M(h', k)$ with color $k+1$.    Clearly, $g$ is obtained from
	$f''$ by a sequence of interchanges and $M(g,k+1)=M(h',k)$.
	Then the graph $G_2=G_1-M(h',k)$  has the maximum degree $k-1$.
  Let
	$h_1$ be the  proper $(k-1)$-edge coloring of
	$G_2$ induced by the matchings $M(h,1), M(h,2),\dots,M(h,k-1)$  and $g_1$
	be the  proper $k$-edge coloring of
	$G_2$ induced by the matchings $M(g,1),M(g,2),\dots,M(g,k)$. 
	By the induction hypothesis,
  $g_1$ can be transformed to $h_1$ by a sequence of interchanges.
	Then the coloring $f$ also can be transformed to $h$ 
	by a sequence of 	interchanges.

	We have thus proved that for every graph $G$ with $\Delta(G)=k$ 
	all proper 	$(\chi'(G)+1)$-edge colorings of $G$ are Kempe equivalent.

	The proposition follows by the principle of induction. 
\end{proof}

	Next, we will prove	Theorem	\ref{th:connect1}.
	The proof is similar to the proof of Theorem \ref{th:connect}
	and therefore we only sketch the proof.

\begin{proof}[{\bf Proof of Theorem	\ref{th:connect1}}]
Assume that for every  graph $H$ with $\Delta(H)=k-1$ all proper $(\chi'(H)+1)$-edge colorings of $H$ are Kempe equivalent,
and let $G$ be  a Class 2 graph with $\Delta(G)=k$ and $\chi'(G)=k+1$.

	Consider  a proper $(k+1)$-edge coloring  $h$ of $G$.  Again as in the proof of 	Theorem \ref{th:connect} we may assume that 
	$M(h,k+1)$ is a maximal matching of $G$. Our goal is then to show 
	that any proper 	$(k+2)$-edge coloring of $G$ 
	can be transformed to $h$ by a sequence of interchanges. The proof is the same 
 	as in the proof of Theorem \ref{th:connect} with only one difference: 
 	instead of the induction hypothesis we  use the above assumption 
 	of  the theorem.
 %
 %This completes the proof of Theorem \ref{th:connect1}.
\end{proof}

%%%%%%%%%%%%%%%%%%%%%%%%%%%%%%%%%%%%%%%%%%%%%%%%%

   \section{Proof of Theorem \ref{th:main}}

	In this section we will prove the main result of our paper.
First we prove the result for 4-regular graphs and then, using it, for any graph  with maximum degree 4.\medskip

\begin{theorem}
\label{th:prel}	
	Let $G$ be a  $4$-regular graph with $\chi'(G)=4$. Then every proper
	$5$-edge coloring of $G$ 
	can be transformed to any proper  $4$-edge coloring  of $G$  
	by a sequence of 	interchanges. 
\end{theorem}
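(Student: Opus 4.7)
The plan is to reduce Theorem~\ref{th:prel} to a problem on a $3$-regular graph, where Theorem~B applies. Concretely, I would first transform $f$ by a sequence of interchanges into a proper $5$-edge coloring $\tilde{f}$ satisfying $M(\tilde{f},1) = M(h,1)$. Once this is achieved, the graph $G' = G - M(h,1)$ is $3$-regular (since $M(h,1)$ is a perfect matching of the $4$-regular graph $G$). The restrictions of $\tilde{f}$ and $h$ to $G'$ are proper $4$-edge colorings of $G'$ (using colors in $\{2,3,4,5\}$ and in $\{2,3,4\}$, respectively), so by Theorem~B they are Kempe equivalent in $G'$. Every interchange used in this equivalence involves two colors from $\{2,3,4,5\}$, hence does not touch any edge of $M(h,1) = M(\tilde{f},1)$, and therefore lifts to a valid interchange in $G$; composing these with the sequence producing $\tilde{f}$ transforms $f$ into $h$.

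The main task is thus to increase $|M(f,1)\cap M(h,1)|$ by interchanges. I would proceed by induction on $|M(h,1)\setminus M(f,1)|$: whenever this quantity is positive, pick an edge $e_0 = xy$ with $h(e_0)=1$ and $f(e_0)\neq 1$, and construct an interchange sequence yielding $f'$ with $|M(f',1)\cap M(h,1)| > |M(f,1)\cap M(h,1)|$. First, reduce to the subcase $f(e_0)=2$: if $f(e_0) = c \in \{3,4,5\}$, interchanging colors on the maximal $(2,c)$-colored path or cycle through $e_0$ leaves $M(f,1)$ unchanged and produces a coloring in which $e_0$ is recolored to $2$. Once $f(e_0)=2$, let $P$ be the maximal $(1,2)$-colored path or cycle through $e_0$ under $f$. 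Because $h$ is proper and the colors along $P$ alternate, no edge of $P$ within edge-distance less than $2$ of $e_0$ is $h$-correct of color $1$, so the only possible obstruction to applying Lemma~\ref{lem:reduction} is an $h$-correct color-$1$ edge at distance exactly $2$ from $e_0$ on $P$. If no such obstruction exists, Lemma~\ref{lem:reduction} directly produces the required $f'$.

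If such an obstructing edge exists, I would label $P$ so that its relevant stretch is $u_1v_1v_2v_3v_4$ with $e_0 = u_1v_1$ and with $v_3v_4$ the obstructing edge, and then apply Lemma~\ref{lem:onesided}(a) to $(v_1,v_2,v_3)$. Either its conclusion yields a coloring in which some color of $\{3,4,5\}$ is missing across $(v_1,v_2)$ or $(v_2,v_3)$, in which case I would recolor the central $(1,2)$-edge of that pair with the missing color, splitting $P$ into a shorter $(1,2)$-colored path that no longer contains $v_3v_4$ and thus reducing the situation to the case already handled by Lemma~\ref{lem:reduction}; or conditions (i) and (ii) of Lemma~\ref{lem:onesided}(a) hold, which places us in the setup of Lemma~\ref{lem:x1x2}. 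Possibility~(III) of Lemma~\ref{lem:x1x2} again yields a missing color and is treated as above. Possibility~(II), provided $1 \notin f(u_1)$, yields a coloring $g$ with $M(g,1)\cap M(h,1) = M(f,1)\cap M(h,1)$ and $1 \notin g(u_1)\cup g(v_1)$, so one can simply recolor $e_0$ with color~$1$ to close the induction step.

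The main obstacle is to handle possibility~(I) of Lemma~\ref{lem:x1x2}, and the residual subcase of (II) in which $1 \in f(u_1)$. In these cases the color sets at and near $v_2$ are almost entirely determined, and I would exploit this rigid structure to construct additional interchanges: for instance a saturated fan at $v_2$ whose downshift frees one of the colors in $\{3,4,5\}$ (reducing again to the missing-color case), or, in the residual subcase of~(II), a $(1,c')$-interchange along the color-$1$ edge already incident to $u_1$ for some $c' \notin f(u_1)$, chosen so that the corresponding maximal $(1,c')$-path does not meet any $h$-correct color-$1$ edge. The delicate point throughout is verifying that every manipulation preserves $|M(f,1)\cap M(h,1)|$ and ultimately strictly increases it; this relies crucially on the degree bound $\Delta(G)=4$ and the structural information supplied by Lemmas~\ref{lem:onesided} and~\ref{lem:x1x2}. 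Once these exceptional cases are settled, the induction closes, and combined with the reduction to $G'$ above, the proof of Theorem~\ref{th:prel} is complete.
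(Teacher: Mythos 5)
Your overall strategy coincides with the paper's: reduce to producing a coloring with $M(\tilde f,1)=M(h,1)$, finish with Theorem~B on $G-M(h,1)$, and in the inductive step use Lemma~\ref{lem:reduction}, Lemma~\ref{lem:onesided} and Lemma~\ref{lem:x1x2} to strictly increase $|M(f,1)\cap M(h,1)|$. The preliminary observations (only a distance-exactly-$2$ correct edge can obstruct Lemma~\ref{lem:reduction}; the reduction to $f(e_0)=2$ via a $(2,c)$-interchange) are correct. However, there is a genuine gap: you treat the obstruction as one-sided. When $e_0=u_1v_1$ is an \emph{interior} edge of $P$, so that $P=\dots u_4u_3u_2u_1v_1v_2v_3v_4\dots$, there can be $h$-correct color-$1$ edges at distance $2$ on \emph{both} sides ($u_3u_4$ and $v_3v_4$). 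In that situation your key move --- recoloring $v_1v_2$ or $v_2v_3$ with a missing color so that the component of $e_0$ in $G_f(1,2)$ ``no longer contains $v_3v_4$'' --- does not reduce to Lemma~\ref{lem:reduction}, because the shortened path still runs through $u_1,u_2,u_3,u_4$ and still contains the correct edge $u_3u_4$ at distance at most $2$ from $e_0$. Worse, the structural conditions (i),(ii) of Lemma~\ref{lem:onesided} and the rigid palettes of Lemma~\ref{lem:x1x2}(I) can hold simultaneously on both sides; this is the paper's Case~B.2.3, which is the bulk of the proof. It requires first forcing the fully symmetric configuration $f_1(u_i)=f_1(v_i)$ for $i=1,2,3$ together with determined palettes at the four neighbours $x_1,x_2,y_1,y_2$ of $v_2$ and $u_2$, then a separate claim about where maximal $(3,5)$- and $(4,5)$-colored paths from $u_3$ and $v_3$ must end, and finally a six-way case analysis on the palettes of $u_4$ and $v_4$, each resolved by a bespoke chain of recolorings and interchanges. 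None of this is anticipated by your sketch.

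A second, smaller gap concerns possibility~(I) of Lemma~\ref{lem:x1x2}, where $f(x_1)=\{2,3,4,5\}$ and $f(x_2)=\{1,3,4,5\}$. Your proposed remedy --- a saturated fan at $v_2$ whose downshift frees a color in $\{3,4,5\}$ --- cannot succeed there: all of $3,4,5$ appear at both $x_1$ and $x_2$, and $v_2$ sees all five colors among itself and its neighbours, which is exactly the configuration that defeats the fan/downshift argument inside the proof of Lemma~\ref{lem:x1x2}. The paper instead interchanges along the whole path $P$ (making $u_1v_1$ correct at the price of spoiling $v_3v_4$) and then repairs the far end with a short $(5,1)$- or $(4,2)$-path interchange, with separate arguments according to whether $u_1$ coincides with $x_1$ or $x_2$. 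So while your architecture is the right one, the two places you flag as ``the main obstacle'' are precisely where the real work lies, and the sketched remedies do not go through as stated.
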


\begin{proof}	
Let $h$ be a proper  $4$-edge coloring  and $f$ be a proper  $5$-edge coloring  of the graph $G$.
	It suffices to prove that  the coloring
	$f$ can, by a sequence of 
	interchanges, be transformed to  a proper $5$-edge coloring
	$g$ such that	$M(g,1) = M(h,1)$. The result then
	follows by applying Theorem B  to the graph $G'=G-M(g,1)$ with maximum degree 3
	and two proper edge colorings of $G'$ induced 
	by the colorings $g$ and $h$, respectively.
		
	Suppose that there is some edge $e = u_1v_1$ of $G$
	such that $h(e) = 1 \neq f(e)$.  
	Without loss of generality we assume that $f(e)=2$.
	We will prove
	that  by a sequence of interchanges we can transform $f$ into
	a proper $5$-edge coloring $f'$ which satisfies 

\begin{equation}
|M(f',1) \cap M(h,1)| > |M(f,1) \cap M(h,1)|.
\label{E1}
\end{equation}	
This suffices for proving the theorem.
	We will have to consider several cases.
	
\bigskip

%%%%%%%%%%%%%%%%%%%%%%%%%%%%%%%%%%%%%%%%%%%%%%%%%%%%%

\noindent
{\bf Case A.} $e$ is adjacent to at most one edge colored $1$ under $f$:

\medskip
\noindent	
	Without loss of generality we assume that
	color $1$ appears at 
	$v_1$ under $f$ (if $e$ is not adjacent to any edge colored $1$,
	then we just recolor $e$ with $1$ and are done). Consider
	a maximal path $P = u_1v_1v_2 \dots v_k$ in 
	$G$ that is $(1,2)$-colored and has origin at $u_1$.
	Note that since $h(u_1v_1) =1$, $v_1v_2$
	is not correct under $f$.
	If $k \leq 3$, then, by Lemma \ref{lem:reduction},
	we can transform $f$ by interchanges to a required coloring 
	$f'$ satisfying \eqref{E1}.
	Suppose now that $k \geq 4$. 
	If $v_3v_4$ is not correct, then 
	again by Lemma \ref{lem:reduction},
	we can transform $f$ by interchanges to a required coloring 
	$f'$ satisfying \eqref{E1}. So
	in the following we will assume that $v_3v_4$ is correct.

\medskip
\noindent
	{\bf Case A.1.}  One of the conditions (i), (ii) in Lemma
	\ref{lem:onesided}  does not hold.

\medskip
\noindent	
	Then
	there is a sequence of interchanges on bicolored paths 
	with colors from $\{3,4,5\}$ which transform $f$ 
	to a proper $5$-edge coloring $g$  such that 
	$c \notin g(v_i) \cup g(v_{i+1})$
	for some color $c \in \{3,4,5\}$ and some $i\in \{1,2\}$.
	Now we  recolor $v_iv_{i+1}$ with $c$ and 
	 denote  by $g_1$ the proper edge coloring obtained from this operation. 
	Then the maximal $(1,2)$-colored path (under $g_1$) with origin at $u_1$ 
	has length at most two and, by
	Lemma \ref{lem:reduction}, $g_1$ can  by   interchanges be transformed
	into a proper 5-edge coloring $f'$ such that $|M(f',1) \cap M(h,1)| > |M(g_1,1) 	\cap M(h,1)|$.
	This and $M(g_1,1) \cap M(h,1) = M(f,1) \cap M(h,1)$ imply that
	$|M(f',1) \cap M(h,1)| > |M(f,1) \cap M(h,1)|$. 

\medskip
\noindent
	{\bf Case A.2.} The conditions (i),(ii)
	in Lemma \ref{lem:onesided} hold.

\medskip
\noindent
	This means that %$d_G(v_i) = 4$, for $i=1,2,3$;
	$|f(v_i) \cap f(v_j) \setminus \{1,2\}|= 1$ for $i,j \in \{1,2,3\}$
	and $i \neq j$; and the
	colors $3,4,5$ appear at $x_1$ and $x_2$, 
where $x_1,x_2$ denote the vertices  distinct from $v_1$ and $v_3$
	that are adjacent to $v_2$.
 Then
$$f(v_1) =\{1,2,c_1,c_3\}, f(v_2) = \{1,2,c_2,c_3\},
	f(v_3) =\{1,2,c_1,c_2\}$$
for some permutation $ (c_1,c_2,c_3)$ of the set $\{3,4,5\}$.
Clearly, $\{f(v_2x_1),f(v_2x_2)\}=\{c_2,c_3\}$.
 Let $f(v_2x_1)=c_3$ and $f(v_2x_2)=c_2$. 
Without loss of generality
	we  may assume that
	$$f(v_1) =\{1,2,3,5\}, f(v_2) = \{1,2,4,5\},
	f(v_3) =\{1,2,3,4\}.$$
Then $f(v_2x_1)=5$ and $f(v_2x_2)=4$.

\medskip
\noindent
{\bf Case A.2.1.} The vertex $v_4$ is an endpoint of $P$ and $x_2\not=u_1$.

\medskip
\noindent
	First suppose  that  either  $f(x_1)\not=\{2,3,4,5\}$ 
	or $f(x_2)\not=\{1,3,4,5\}$.
	Then, by Lemma \ref{lem:x1x2},  either  condition (II)
	or condition (III) of Lemma  \ref{lem:x1x2} holds. 
	
If condition
	(III)   holds,
	then arguing similarly as in Case A.1  we
	can by interchanges transform  $f$  into a  required coloring $f'$
	satisfying \eqref{E1}.
	
	If condition
	(II)   holds, then %by Lemma \ref{lem:x1x2}
	 we can  by interchanges transform  $f$
into a proper 5-edge coloring $g$
	such that $M(g,1) \cap M(h,1) = M(f,1) \cap M(h,1)$. Moreover,
	since color $1$ does not appear at $u_1$ under $f$, we will have $1\notin g(v_1), 1\notin g(u_1)$. We color the edge $u_1v_1$ with color 1 and obtain a required coloring $f'$ satisfying \eqref{E1}.
		
	Now suppose  that $f(x_1)=\{2,3,4,5\}$ and 
	 $f(x_2)=\{1,3,4,5\}$. 
	 
	We first consider the case
	when $u_1\not=x_1$. We have that $f(v_2x_1)=5$.
	Clearly, $x_1 \neq v_4$ because $1\in f(v_4)$ and 
	$f(x_1)=\{2,3,4,5\}.$
	We make an interchange on $P$ and denote the obtained coloring by $f_1$.
	Then $u_1v_1$ is correct under $f_1$, but $v_3v_4$ is not.
	Moreover, $x_1 v_2 v_3$ is a maximal $(5,1)$-colored path under $f_1$,
	and by interchanging colors on this path we get a new coloring which
	we denote by $f_2$.
	Since $x_1 \neq v_4$, we may now recolor $v_3 v_4$ with color $1$
	to obtain a required  coloring $f'$.
	
	Suppose now that $u_1 = x_1$.
	We have that
	$f(x_2) = \{1,3,4,5\}$ and $d_G(v_2)=4$. Then $v_2$ is incident
	to an edge colored $1$ under $h$. Furthermore, $$h(x_1v_1)=h(v_3v_4)=1,$$
	and thus we must have $h(v_2x_2)=1$.
	Let $x_3$ be the vertex adjacent to $x_2$, such that $f(x_2x_3)=1$.
	Since $h(v_2x_2)=1$, $x_2x_3$ is not correct under $f$, 
	which implies that $v_4 \notin \{x_2,x_3\}$.
	We now interchange colors on $P$ and denote the obtained coloring by $f_1$. 
	Then $x_1v_1$ is correct under $f_1$, but $v_3v_4$ is not.
	Note that $v_1 v_2 x_2$ is a maximal $(4,2)$-colored path under $f_1$.
	We make an interchange on this path and denote the 
	obtained coloring by $f_2$.
	Let $P' = v_4 v_3 v_2 x_2 x_3 \dots$ be
	the maximal $(1,2)$-colored path under $f_2$ with origin at $v_4$.
	Since, $x_2x_3$ is not correct under $f$, it is not correct
	under $f_2$ too, and the desired result now follows  by applying 
	Lemma \ref{lem:reduction} to the coloring $f_2$, the path 
	$P'$ and the edge $v_4v_3$
	instead of $f, P$ and $xy$.

\medskip
\noindent
{\bf Case A.2.2.} The vertex $v_4$ is an endpoint of $P$ and $x_2=u_1$.

\medskip
\noindent
Recall that
	colors $3,4,5$ appear at $x_1$ under $f$.
	If $1$ appears at $x_1$ under $f$, then the edge colored
	$1$ incident to $x_1$ cannot be correct under $f$,
	because we must have $h(v_2x_1)=1$.
	This implies that $v_4 \neq x_1$.
	Furthermore, $x_1 v_2 v_3$
	is a maximal $(5,2)$-colored path under $f$ and by making an interchange
	on this path, we obtain a coloring that we denote by $f_1$. Now, note that
	$P' = x_2 v_1 v_2 x_1 \dots$ is a maximal $(1,2)$-colored path under $f_1$,
	and since the edge incident to $x_1$ colored $1$ is not correct,
	it now follows from Lemma \ref{lem:reduction}
	that there
	is a proper $5$-edge coloring $f'$ of $G$ that can be obtained from
	$f_1$ via a sequence of interchanges, and such that
	$|M(f',1) \cap M(h,1)| > |M(f,1) \cap M(h,1)|$.
	If instead the color  $2$ appears at $x_1$ under $f$, then the proof proceeds exactly
	as in  Case A.2.1 when $u_1 \not=x_1$.

\bigskip
\noindent
{\bf Case A.2.3.} The vertex $v_4$ is an internal vertex of $P$.

\medskip
\noindent
It follows from Lemma 2.1 (b) that
there is a sequence of interchanges on bicolored paths 
	with colors from $\{3,4,5\}$ which transform $f$ 
	to a proper $5$-edge coloring $g$  such that 
$c \notin g(v_i) \cup g(v_{i+1})$
	for some color $c \in \{3,4,5\}$ and some $i\in \{1,2,3\}$.
	
	If $i\leq 2$, then arguing similarly as in Case A.1, we can obtain a required coloring $f'$
satisfying \eqref{E1}.	
	Now we suppose that such a color $c$ exists only for $i=3$. This means that  $c\notin g(v_3)\cup g(v_4)$,
 $\{3,4,5\}\subseteq g(v_1)\cup g(v_2)$ and $\{3,4,5\}\subseteq g(v_1)\cup g(v_2)$.
	
	If the coloring $g$ does not satisfy one of the conditions (i) and  (ii) in Lemma 2.1 (with $g$ instead of $f$), then in a similar way as in 
	Case A.1 we can obtain a required coloring $f'$.
	
	Suppose now that the conditions (i), (ii) of Lemma 2.1 
	for the coloring $g$ 	(instead of $f$)  hold. Then  
	$$g(v_1) =\{1,2,b_1,b_3\}, g(v_2) = \{1,2,b_2,b_3\},
	g(v_3) =\{1,2,b_1,b_2\}$$
	for some permutation $ (b_1,b_2,b_3)$ of the set $\{3,4,5\}$.
	Clearly, $\{f(v_2x_1),f(v_2x_2)\}=\{b_2,b_3\}$.

	Without loss of generality we can assume that
	$$g(v_1) = \{1,2,3,5\}, g(v_2) = \{1,2,4,5\}, g(v_3) = \{1,2,3,4\},$$
 	and that $g(x_1v_2) = 5$, $g(x_2v_2)=4$
 	(possibly renaming the vertices $x_1$ and $x_2$ if this does not hold).
 	Since there is a color $c\notin g(v_3)\cup g(v_4)$ and $g(v_3) = \{1,2,3,4\}$,
	we have that $c=5$ and  $g(v_4)=\{1,2,3,4\}$. 
	If $u_1 \neq x_2$, then similarly as in Case A.2.1,
	it follows from 
	Lemma 2.2 that either
	$g(x_1)=\{2,3,4,5\}$ and $g(x_2)=\{1,3,4,5\}$, or 
	we can obtain a required coloring $f'$ satisfying
	\eqref{E1}.
	Thus if $u_1 \neq x_2$, then it suffices to consider the case
	when $g(x_1)=\{2,3,4,5\}$ and $g(x_2)=\{1,3,4,5\}$.
% 	Again using Lemma 2.1(a)
% 	we may assume that colors $3,4,5$ appear at
% 	$x_1$ and $x_2$ under $g$, and so they are not internal vertices of $P$.
 	%

 	We first consider the case when $u_1 \notin \{x_1,x_2\}$.
 	From $g$ we costruct the coloring $g_1$ by recoloring $v_3v_4$ with color $5$.
 	Then
 	$g_1(x_1)=\{2,3,4,5\}$ and $g_1(x_2)=\{1,3,4,5\}$.
 	Now
 	we make an interchange on the path $u_1 v_1 v_2 v_3$ and denote the constructed
 	coloring by $g_2$. Note that $v_4 v_3 v_2 x_1$ is a maximal
 	$(1,5)$-colored path under $g_2$. By making an interchange on this path
 	we obtain a required coloring $f'$ satisfying \eqref{E1}.

 	Now assume that $u_1=x_1$.
 	From $g$ we construct the coloring $g_1$ by recoloring $v_3v_4$ with color $5$.
 	Then $g_1(x_1)=\{2,3,4,5\}$ and $g_1(x_2)=\{1,3,4,5\}$.
 	By making an interchange on the path $x_1v_1v_2v_3$ we obtain a coloring
 	$g_2$. Note that the path $v_4 v_3 v_2 x_1 v_1$ is $(5,1)$-colored 
 	under $g_2$
 	and that the edge $v_3 v_4$ is adjacent to only one edge colored $1$.
 	Moreover, $v_2$ and $v_4$ are not adjacent, because
 	$g(x_1) \neq g(v_4)$ and $g(x_2) \neq g(v_4)$.
	Consider the path $P'=u_1'v_1'v_2'v_3'v_4'$, where 
	$u_1'=v_4, v_1'=v_3, v_2'=v_2, v_3'=u_1$ and $v_4'=v_1$.
 	 If we exchange   the colors $5$ and $2$ in the coloring $g_2$ 
	and  consider the obtained coloring instead of $g$, 
	the path 	$P'=u_1'v_1'v_2'v_3'v_4'$
	instead of $P$ and the vertices $x_1'=v_1, x_2'=x_2$ 
	instead of $x_1$ and  $x_2$,  then we
	obtain a situation which is similar to the situation  considered in the
	previous paragraph, because here $u_1' \notin \{x_1',x_2'\}$. 
 		
	Now assume that $u_1=x_2$. 
 	Recall that
	colors $3,4,5$ appear at both $x_1$ and $x_2$ under $f$.
 	As before, we construct
 	the coloring $g_1$ from $g$ by recoloring $v_3v_4$ with color $5$.
 	Thereafter, we make a interchange
 	on the path $x_2 v_1 v_2 v_3$ and denote the obtained coloring by $g_2$.
    
 	If $x_1$ is incident to an edge colored $2$ under $g_2$,
	then $v_4 v_3 v_2 x_1$ is a maximal $(1,5)$-colored path under $g_2$.
	By  making an interchange on this path we obtain a
	required coloring $f'$ satisfying \eqref{E1}. 

 	Suppose now that  $x_1$ is incident to an edge colored $1$ under $g_2$. 
 	Consider the path
 	$v_4 v_3 v_2 x_1$ that is $(5,1)$-colored path under $g_2$ with origin at
 	$v_4$. Clearly, the edge colored $1$ incident to $x_1$ 
 	cannot be correct under $g_2$ (because we must have $h(x_1v_2)=1$).
	Put $x'=v_4$, $y'=v_3$ and let $P'=v_4 v_3 v_2 x_1 \dots$ be 
 	the maximal $(5,1)$-colored path with origin at $x'=v_4$.
	Now we obtain the required result by applying Lemma \ref{lem:reduction}
	to the coloring $g_2$, 
	the path $P'$ and the edge $x'y'$ instead of $f, P$ and $xy$.

\medskip
\noindent

\bigskip
\noindent
{\bf Case B.} $e$ is adjacent to two edges colored $1$ under $f$.

\medskip
\noindent
	We have that $f(e) =2$.  Let $u_1u_2$ and $v_1v_2$ 
	be the edges adjacent to $e$ 	and colored $1$ under $f$. Since $h(e) =1$, 
	none of the  edges $u_1u_2$ and $v_1v_2$  are $h$-correct.
 	Let $P$
 	be the component in $G_f(1,2)$ that contains $e$. It can be a path or cycle.

	If there is no correct edge colored $1$ under $f$ at distance $2$
	from $e$ along $P$, then the theorem is true by
	Lemma \ref{lem:reduction}. 

	Suppose now that there is
	a correct edge at distance $2$ from $e$ on $P$.
	This implies that if $P$ is a cycle then it has the length at least 6.

\bigskip
\noindent
{\bf Case B.1.} $P$ is a path where the distance between $e$ and the first or the last edge of $P$ 
 is at most 1.
\medskip

\noindent 	
Let  $P=u_lu_{l-1}...u_1v_1v_2...v_k$, where  $2\leq l \leq 3$ or $2\leq k\leq 3$.
Without loss of generality we may assume that $2\leq l\leq 3$. Since there is a correct edge at distance 2 from $e$ on $P$, we will have that $k\geq 4$ and  the edge
	$v_3v_4$ is correct. If one of the conditions 
	(i) and (ii) of Lemma \ref{lem:onesided} does not hold,  then arguing similarly as in Case A.1  we
	can by a sequence of interchanges transform  $f$  to a  proper coloring $f'$ satisfying \eqref{E1}.

	Assume now that the conditions
	(i), (ii) of Lemma \ref{lem:onesided}  hold.
	Then %arguing similarly as in Case A.1  
	we may assume  without loss of generality  that
	$$f(v_1) =\{1,2,3,5\}, f(v_2) = \{1,2,4,5\},
	f(v_3) =\{1,2,3,4\}, f(v_2x_1)=5, f(v_2x_2)=4,$$
	and the colors
	$3,4,5$ appear at $x_1$ and $x_2$,  where $x_1,x_2$  are the vertices  distinct 	from $v_1$ and $v_3$ that are adjacent to $v_2$.  Since $l\geq 2$, we have that 	$\{1,2,\}\subseteq f(u_1)$. Therefore $u_1 \notin \{x_1,x_2\}$.

	Suppose that $f(x_1) \neq \{2,3,4,5\}$.
	Then one of the conditions (II) and (III) of Lemma \ref{lem:x1x2} holds.
	If the condition (II) holds,  then we obtain a proper 5-edge 
	coloring $g$ such 	that the color 1 is missing at $v_1$.
	This situation for $g$ is similar to 
	the situation considered in Case A for $f$. Therefore we can transform $g$ by a 	sequence of interchanges to a required coloring $f'$ satisfying \eqref{E1}.

	If the condition (III) holds, then 
	there is a sequence of interchanges on bicolored paths 
	with colors from $\{3,4,5\}$ which transform $f$ 
	to a proper $5$-edge coloring $g$  such that 
	$c \notin g(v_i) \cup g(v_{i+1})$
	for some color $c \in \{3,4,5\}$ and some $i\in \{1,2\}$.
	Now we  recolor $v_iv_{i+1}$ with $c$ and 
	denote  by $g_1$ the proper edge coloring obtained from this operation. 
	Then the path $P'=u_lu_{l-1}...u_1v_1v_2...v_i$ is the maximal $(1,2)$-colored (under $g_1$) path with origin at $u_l$, such that 
	$g_1(u_1v_1)=2, h(u_1v_1)=1$ and there is no 
	$h$-correct edge of color 1 on $P'$.
	This situation  for $g_1$ is similar to 
	the situation considered in Lemma \ref{lem:reduction} for 
	the coloring $f$, the path $P$ and the edge $xy$.
	Therefore we can transform $g_1$ 
	by a sequence of interchanges to a required coloring $f'$
satisfying \eqref{E1}.

		We now consider the case when $f(x_1)=\{2,3,4,5\}$.
	%In particular, this implies that
	%$v_4 \neq x_1$.
	%
	If $v_4$ is an endpoint of $P$ then we proceed as follows:
	make an interchange on $P$ and denote the obtained coloring by $f_1$.
	Note that $|M(f_1,1) \cap M(h,1)| = |M(f,1) \cap M(h,1)|$, since
	$u_1v_1$ is correct under $f_1$, but $v_3v_4$ is not.
	However, $v_3v_4$ is adjacent to only one edge colored
	$1$ under $f_1$. Thus we have a situation that 
	is similar to the situation 	considered in Case A
	with the coloring $f_1$ and the edge $v_3v_4$ instead of $f$ and $e$.
 	Hence we can by  interchanges transform $f_1$, and therefore 
 	$f$ too, to a 	required coloring $f'$
	satisfying \eqref{E1}.
	
	If $v_4$ is not an endpoint of $P$, then by Lemma
	\ref{lem:onesided} (b) there is a sequence of 
	interchanges on bicolored paths
	with colors from $\{3,4,5\}$ yielding a coloring $g$, such that
	$c \notin g(v_j) \cup g(v_{j+1})$, for some $j \in\{1,2,3\}$ and some color $c \in \{3,4,5\}$.
	Recolor $v_j v_{j+1}$ with $c$ and denote the obtained coloring by $g_1$.
 If $j\leq 2$, the
	desired result  follows by applying Lemma \ref{lem:reduction} to 
	the coloring $g_1$ instead of $f$. Suppose now that $j=3$.
	We  make an interchange on the maximal $(1,2)$-colored path
	containing $u_1v_1$ and denote the obtained coloring by $g_2$. 
	Then $u_1v_1$ is correct under $g_2$ but $v_3v_4$ is not.
	Furthermore $|M(g_2,1) \cap M(h,1)|\geq |M(f,1) \cap M(h,1)|$ and
$v_3 v_4$ is adjacent to only one edge colored
	$1$ under $g_2$.
Thus we have a situation that is similar to the situation considered in Case A
with the coloring $g_2$ and the edge $v_3v_4$ instead of $f$ and $e$.
 Therefore we can by using interchanges construct a required coloring $f'$ satisfying \eqref{E1}.

\bigskip
\noindent
{\bf Case B.2.}  $P$ is a path with
$P = u_l u_{l-1} \dots u_2 u_1 v_1v_2 \dots v_k$ where $l \geq 4$ and $k\geq 4$, or
$P$ is a cycle of length at least 6 with $P=u_lu_{l-1}...u_1v_1v_2...v_l$, where $l\geq 3$. 
	
\medskip
\noindent
{\bf Case B.2.1}.  At least one of the conditions (i), (ii) of Lemma 2.1 does not hold.

\medskip
\noindent
Then, by Lemma \ref{lem:onesided} (a),
	there is a sequence of interchanges on bicolored paths
	with colors from $\{3,4,5\}$
	which transforms $f$ to a proper $5$-edge coloring $g$, such that $c\notin g(v_j)\cup g(v_{j+1})$
	for some color $c \in \{3,4,5\}$ and some $j\in \{1,2\}$.
	We recolor $v_jv_{j+1}$ with color $c$. If $j=1$, we obtain a situation similar to the situation considered in Case A.
If $j=2$, we obtain a situation similar to the situation considered in Case B.1. Hence in both cases we can transform $f$  to a proper coloring $f'$ satisfying \eqref{E1} by a sequence of interchanges.\bigskip

\medskip
\noindent
{\bf Case B.2.2}. At least 	one of the following conditions does not hold:
	
	\begin{itemize}
	\item[(2.1)] $|f(u_i) \cap f(u_j) \setminus \{1,2\}|= 1$
for $i,j \in \{1,2,3\}$
	satisfying $i \neq j$,
	\item[(2.2)] if $y_1$ and $y_2$
	are the vertices  distinct from $u_1$ and $u_3$
	that are adjacent to $u_2$,
	then colors $3,4,5$ appear at $y_1$ and $y_2$.
	\end{itemize}
	
\medskip
\noindent
	Consider the (1,2)-colored path $v_1u_1u_2u_3u_4$. Arguing similarly as in the Case B.2.1, we can  construct a required coloring $f'$ satisfying \eqref{E1} by a sequence of interchanges.\bigskip

\medskip
\noindent
{\bf Case B.2.3}. The conditions (i), (ii) of Lemma 2.1 hold 
for the vertices $v_1,v_2, v_3$ 
and the conditions (2.1),(2.2) hold for the vertices $u_1, u_2, u_3$.

\medskip
\noindent
Suppose that it is impossible to  transform the coloring $f$ to a required coloring $f'$ 
satisfying \eqref{E1} by using only interchanges.

We will show that  then $f$  can be transformed  
		via interchanges on bicolored paths
		with colors from $\{3,4,5\}$
		to a proper $5$-edge coloring $f_1$ such that

\begin{equation}
f_1(v_1) = f_1(u_1), \quad f_1(v_2) = f_1(u_2), \quad
		f_1(v_3) = f_1(u_3).
\label{E2}
\end{equation}

 Without loss of generality we may assume that
	$$f(v_1) =\{1,2,3,5\}, f(v_2) = \{1,2,4,5\},
	f(v_3) =\{1,2,3,4\}.$$
	
	Suppose first that $f(u_1) \neq f(v_1)$,
	e.g. that $f(u_1) = \{1,2,4,5\}$.
	(The case when $f(u_1) = \{1,2,3,4\}$ is similar.)
	Then, by condition (2.1),  $f(u_1)\not= f(u_2)$.
	This means that either $f(u_2) = \{1,2,3,5\}$ or $f(u_2) = \{1,2,3,4\}$.

	If  $f(u_2) = \{1,2,3,5\}$, then, by condition (2.1), 
	$f(u_3) = \{1,2,3,4\}$. 
	Consider a maximal $(3,4)$-colored path $Q$ with origin at $u_1$.
	We show that $u_2$ is an endpoint of $Q$. Assuming that
	$u_2$ is not an endpoint of $Q$ and  
	interchanging the colors on $Q$ we obtain a coloring $f_2$ such that
	$f_2(u_1) = f_2(u_2) = \{1,2,3,5\}$. Then we may recolor  $u_1u_2$ with color 4 
	and obtain a situation similar to the situation considered in Case A,
	which is impossible because
	in that case we can transform 
	$f$ by interchanges to a required coloring $f'$ satisfying \eqref{E1}.
	Thus, $u_2$ is an endpoint of $Q$.
	Then by interchanging colors
	on $Q$ we obtain the desired coloring $f_1$ satisfying \eqref{E2}.
	
	Now suppose  that  $f(u_2) = \{1,2,3,4\}$.
	Then, by condition (2.1), $f(u_3) = \{1,2,3,5\}$. 
	Let $Q$ be a maximal 	$(4,3)$-colored path
	with origin at $u_1$. Arguing
	similarly as in the preceding paragraph
	we obtain that
	$u_3$ is an endpoint of $Q$.
	By interchanging colors on $Q$ we get 
	a coloring $f^*$.
	Next, we consider a maximal path $Q'$ that 
	is $(3,5)$-colored under $f^*$
	and has origin at $u_2$. 
	Arguing
	similarly as in the preceding paragraph
	we obtain that
	$u_3$ is an endpoint of $Q'$. Now by interchanging 
	colors on $Q'$ we
	get the desired coloring $f_1$ satisfying \eqref{E2}.\medskip

	Suppose now that $f(u_1)=f(v_1)=\{1,2,3,5\}$. 

	If $f(u_2)=f(v_2)=\{1,2,4,5\}$ then, by (2.1), 
	$f(u_3)=\{1,2,3,4\}=f(v_3)$, that is, the coloring $f$ itself can be considered 	as the desired coloring $f_1$ satisfying \eqref{E2}.

	If $f(u_2)\neq f(v_2)$ then the conditions (2.1) and (2.2) 
	imply that 	$f(u_2)=\{1,2,3,4\}$ and
	$f(u_3)=\{1,2,4,5\}$. Let $Q_1$ be a maximal $(3,5)$-colored path
	with origin at $u_2$. Then $u_3$ is an endpoint of $Q_1$. (Otherwise by
	interchanging the colors on $Q_1$ we may obtain a coloring $f_2$ such that
	$f_2(u_3) = f_2(u_2) = \{1,2,4,5\}$. 
	Then we may recolor  $u_2u_3$ with color 3 
	and obtain a situation similar to the 
	situation considered in Case B.1,
	which is impossible because
	in that case we can transform 
	$f$ by interchanges to a required coloring $f'$ satisfying \eqref{E1}.)
	Now by interchanging colors
	on $Q_1$ we obtain the desired coloring $f_1$ satisfying \eqref{E2}.\medskip

	Thus we may,
	without loss of generality, further assume
	that $f$ can by interchanges on bicolored paths
	with colors from $\{3,4,5\}$ be transformed to a coloring $f_1$
	such that
\begin{equation}
f_1(v_1) = f_1(u_1) = \{1,2,3,5\},
	f_1(v_2) = f_1(u_2) = \{1,2,4,5\},
	f_1(v_3) = f_1(u_3) = \{1,2,3,4\}.
	\label{E3}
\end{equation}

\bigskip
Furthermore we can claim that
it is impossible to transform  $f_1$ to a
proper 5-edge coloring $f'$ satisfying \eqref{E1} by using only interchanges, 
because $f_1$ is obtained from $f$  by a sequence of interchanges and, by our assumption,
 it is impossible to transform  $f$ to a
proper 5-edge coloring $f'$ satisfying \eqref{E1} by using only interchanges. 
 	
	Denote by $x_1,x_2$ the vertices  distinct from $v_1$ and $v_3$
	that are adjacent to $v_2$, where $f(v_2x_1) = 5$.
	We now prove that 
	$f_1(x_1)=\{2,3,4,5\}$ and $f_1(x_2) = \{1,3,4,5\}$.
	Suppose that one of these two conditions does not hold.
	Then, by Lemma \ref{lem:x1x2},
	one of conditions (II) and (III) of  Lemma \ref{lem:x1x2}  holds.
 	If the condition (III) holds,  we may transform 
 	$f_1$ by interchanges on paths
	with colors from $\{3,4,5\}$
	to  a proper coloring $g$, such that 
	$c \notin g(v_i) \cup g(v_{i+1})$
	for some color $c \in \{3,4,5\}$ and some $i\in \{1,2\}$.
	Then by recoloring $v_iv_{i+1}$ with $c$
	we  obtain a situation that is similar  to the situation considered in  
	Case A or Case B.1.
	If the condition (II) holds, then, by Lemma \ref{lem:x1x2}, 
	we may transform $f_1$ by interchanges to another proper coloring 
	and again  obtain  a situation which is similar to 
	the situation considered in  Case A.

	Thus if one of conditions $f_1(x_1)=\{2,3,4,5\}$ and 
	$f_1(x_2) = \{1,3,4,5\}$ 	does not hold, then  we obtain  
	situations which are similar to situations
	considered in Case A or Case B.1, which is impossible because
	in those cases we can transform 
	$f_1$ by interchanges to a required coloring $f'$ satisfying \eqref{E1}.
	Therefore $f_1(x_1)=\{2,3,4,5\}$ and $f_1(x_2) = \{1,3,4,5\}$.

	Denote by $y_1$ and $y_2$ the vertices adjacent to $u_2$
	that are  distinct from $u_1$ and $u_3$, and
	where $f_1(u_2y_1) = 5$. Clearly, $x_1\neq y_1$.
	Arguing similarly as in the preceding paragraph
	we may deduce  that
	$f_1(y_1)=\{2,3,4,5\}$ and $f_1(y_2)=\{1,3,4,5\}$.
	
	Hence, without loss of generality we will in the following
	assume that

\begin{equation}
f_1(x_1)=f_1(y_1) = \{2,3,4,5\} 
	\text{ and } f_1(x_2) = f_1(y_2) =\{1,3,4,5\}.
\label{E4}
\end{equation}
	In particular, this implies that  $x_1$ and $y_1$ are not internal vertices of $P$.\bigskip

We will now show that $P$ is not a cycle of length 6 or a path of length 7.

Suppose that $P$ is a cycle of length 6, that is, $P=u_3u_2u_1v_1v_2v_3u_3$. Then the edge $v_3u_3$ is $h$-correct, because otherwise 
by interchanging colors on  $P$ we can obtain a proper coloring $f'$ satisfying (4.1), which is impossible. 
	Using properties (4.3) of $f_1$,
	we can obtain a proper  coloring $f'$ satisfying (4.1)  by first
	recoloring $v_3 u_3$ with color $5$, 
	then interchanging colors on the path $v_3 v_2 v_1 u_1 u_2 u_3$,
	and	thereafter making an interchange on 
	the maximal $(1,5)$-colored path
	$y_1 u_2 u_3 v_3 v_2 x_1$. Since this is impossible, $P$ cannot be a cycle of length 6.

Suppose now that $P$ is a path of length 7. Then $P=u_4u_3u_2u_1v_1v_2v_3v_4$.
Note that $\{v_4, u_4\} \cap \{x_1, y_1\} = \emptyset$,
	because $2 \notin f_1(v_4)$, $2 \notin f_1(u_4)$
	and $2 \in f_1(x_1)$, $2\in f_1(y_1)$.
	Make an interchange on $P$ and denote the obtained coloring by $f_2$.
	Then $u_3 u_2 y_1$ and $v_3 v_2 x_1$ are two disjoint maximal
	$(1,5)$-colored paths under $f_2$ in $G$. 
	By interchanging colors on these
	paths and then recoloring $u_4u_3$ and $v_4v_3$ with color $1$
	we obtain  a proper coloring $f'$ satisfying (4.1). 
	Since this is impossible, $P$ cannot be a path of length 7.\bigskip
	
	Thus $P$ has the length at least 8.
 	We will show now that  the edges $v_3v_4$ and $u_3u_4$ 
 	are $h$-correct under $f_1$.

	 Since $P$ has length at least $8$, at most one of $u_4$ and
	$v_4$ are endpoints of $P$. Suppose first that $u_4$ is
	an endpoint of $P$. If $u_3u_4$ is not correct, then we may proceed
	exactly as in Case B.1 to obtain a coloring $f'$
	satisfying \eqref{E1}, which contradicts our assumption
	that this is impossible. 
	Hence, we may assume that $u_3u_4$ is correct.
	
	If $v_3v_4$ is not correct, then by applying Lemma
	\ref{lem:onesided} (b) to the path $u_1 v_1 v_1 v_2 v_3 v_4 v_5$
	we get that there is 
	a sequence of interchanges on bicolored paths
	with colors from $\{3,4,5\}$ yielding a coloring $g$, such that
	$c \notin g(v_j) \cup g(v_{j+1})$ 
	for some $j \in\{1,2,3\}$ and some color $c\in \{3,4,5\}$. 
	By recoloring $v_j v_{j+1}$ with $c$,
	we obtain  a situation which is similar to one of 
	the situation considered  in 
	Case B.1 and Case A. This is impossible because in those cases
	we can  obtain, by interchanges,  
	a required coloring $f'$ satisfying \eqref{E1}.
	
	Suppose now that none of $u_4$ and $v_4$ is an endpoint of $P$.
	If 
	$u_3u_4$ (or $v_3v_4$) is not correct, then by applying Lemma
	\ref{lem:onesided} (b) to the path $v_1 u_1 u_2 u_3 u_4 u_5$
	(or $u_1 v_1 v_2 v_3 v_4 v_5$, respectively) and proceeding as in the preceding
	paragraph
	%
	%we get that there is 
	%a sequence of interchanges on bicolored paths
	%with colors from $\{3,4,5\}$ yielding a coloring $g$, such that
	%$c \notin g(u_j) \cup g(u_{j+1})$ for some $j \in\{1,2,3\}$ 
	%and some color $c\in \{3,4,5\}$. 
	%By recoloring $u_ju_{j+1}$ with $c$,
	we obtain  a situation which is similar to one of the situations considered  in 
	Case B.1 and Case A. This is impossible because in those cases
	we can  obtain, by interchanges,  a required coloring 
	$f'$ satisfying \eqref{E1}. 
	
	%Thus we may assume that $u_3u_4$ is correct under $f_1$. 
	%Arguing similarly for the path $u_1v_1v_2v_3v_4$, we can 
	%deduce that  the edge $v_3v_4$ is also
	%$h$-correct. 
	Thus we have that

\begin{equation}
\text {the edges}  \  u_3u_4 \ \text {and} \ v_3v_4  \ \text  {are correct under} f_1
\label{E5}
\end{equation}

	We now prove the following claim that holds for the coloring $f_1$ in Case B.2.3
	when $P$ has the length at least $8$.
	
\begin{claim}
\label{cl:paths}

\begin{itemize}

	\item[(a)]	If $Q$ is a maximal $(3,5)$-colored path under $f_1$
	with origin at $u_3$ (at $v_3$), then $u_2$ (respectively, $v_2$)
	is an endpoint of $Q$.
	
	\item[(b)] If $Q'$ is a maximal $(4,5)$-colored path under $f_1$
	with origin at $u_3$ (at $v_3$), then 
	
	\begin{itemize}
		
\item[(i)] $u_1$ (respectively, $v_1$) is an endpoint of $Q'$,

	\item[(ii)] $Q'$ passes through $u_2$ (through  $v_2$),
	and
	
	\item[(iii)] $Q'$ does not pass through $v_2$ (through $u_2$).
	
	\end{itemize}
	\end{itemize}	
\end{claim}
\begin{proof}
	We first prove (a).
	Let $Q$ be a maximal $(3,5)$-colored path 
	with origin at $u_3$. Suppose that  $u_2$ is not an endpoint of $Q$.
	Then, by making an
	interchange on $Q$, we obtain a proper coloring $f^*$
	satisfying  $f^*(u_3) = f^*(u_2)$. Thus  we
 	obtain a situation similar to the situation considered in Case B.2.2,
	which is impossible because
	in that case we can transform 
	$f$ by interchanges to a required coloring $f'$ satisfying \eqref{E1}.
	Hence $u_2$ is  an endpoint of $Q$.
	
	We now prove (b).
	Let $Q'$ be a maximal path that is $(4,5)$-colored
	under $f_1$ and has origin at $u_3$.
	By interchanging colors on this path
	we obtain a coloring $f_2$.

	Then  $u_1$ is  an endpoint of $Q'$, because 
	otherwise the coloring $f_2$ will satisfy  $f_2(u_3)=f_2(u_1)$,
	which is impossible by the same reason as in  
	the proof of the part  (a) of this claim.

	Suppose that $u_2\notin V(Q')$. Then	 by applying Lemma \ref{lem:x1x2} 
	to the path $v_1u_1u_2u_3 u_4$, we deduce that
	one of conditions (II) and (III) of  Lemma \ref{lem:x1x2}  holds.
 If the condition (III) holds,  we may transform $f_2$ by interchanges 
on paths
	with colors from $\{3,4,5\}$
	to  a proper coloring $g$, such that $c \notin g(u_i) \cup g(u_{i+1})$
	 for some color $c \in \{3,4,5\}$, and some $i\in \{1,2\}$.
	Then by recoloring $u_iu_{i+1}$ with $c$
	we  obtain a situation that is similar  to the situation considered in  Case A or Case B.1.
If the condition (II) holds, then, by Lemma \ref{lem:x1x2}, we may transform $f_2$ by interchanges to another proper coloring 
and again  obtain  a situation which is similar to the situation considered in  Case A.

Thus if $u_2\notin V(Q')$, then  we obtain  a situation which is similar to one of the situations
considered in Case A and Case B.1, which is impossible because
in those cases we can transform 
$f$ by interchanges to a required coloring $f'$ satisfying \eqref{E1}.
Therefore $u_2\in V(Q')$.

	Finally,  $v_2 \notin V(Q')$, because if   $v_2 \in V(Q')$, then
	$v_1v_2x_1$ is a maximal 
	$(1,4)$-colored path under $f_2$, and
	by making an interchange on this path
	we obtain a coloring where only
	one edge adjacent to $u_1v_1$ is colored $1$,
	thus reducing this situation to the situation considered in Case A.
	But this is impossible because
in that case we can transform 
$f$ by interchanges to a required coloring $f'$ satisfying \eqref{E1}.
\end{proof}	
\bigskip

We continue the proof of the theorem. 
We have that the proper 5-edge coloring $f_1$ satisfies the conditions \eqref{E3}, \eqref{E4}, \eqref{E5} and
$P$ is either a cycle of length at least 8, or a path  with
$P = u_l u_{l-1} \dots u_2 u_1 v_1v_2 \dots v_k$ where $l \geq 4$, $k\geq 4$ and $k+l\geq 9$.	
 We assumed that   it is impossible, using only interchanges, to transform  $f$ (and therefore $f_1$ too)  to a
proper 5-edge coloring $f'$ satisfying \eqref{E1}. 
Now we will show that  for all possible $P$, despite our assumption, we can transform $f_1$  by interchanges 
either directly to a proper coloring $f'$ satisfying (4.1), or to another proper coloring $\varphi$ where we have a situation which is similar to one of situations considered in 
Case A and Case B.1. This will lead to a contradiction, because in those cases 
we can transform 
$\varphi$, and therefore $f_1$ too, by a sequence of interchanges to a proper coloring $f'$ satisfying \eqref{E1}.

\bigskip

\noindent
{\bf Case B.2.3.1.} $P$ is a path and only one of the vertices $u_4$ and $v_4$ is an endpoint of $P$.
	
\medskip	
\noindent
 	Without loss of generality we assume that $u_4$  is an endpoint of $P$.
	If $f_1(v_4) = \{1,2,3,4\}$,
	then we recolor $v_3v_4$ with color $5$, and thereafter make
	an interchange on $u_4 u_3 u_2 u_1 v_1 v_2 v_3$ and denote the obtained
	coloring by $f_2$.
	Then $u_3 u_2 y_1$ is a maximal $(1,5)$-colored path  under $f_2$. 
	By interchanging colors on this path
	and then recoloring $u_4u_3$ with color $1$ we obtain a coloring where
	$u_4u_3$ and $u_1v_1$ are correct 
	but not $v_3v_4$. However, $v_3v_4$ is incident to only
	one edge colored $1$. We have thus obtained a situation which is   
	is similar to one of the	situations considered in Case A.
		
	Suppose now that $f_1(v_4) = \{1,2,3,5\}$.
	Then we consider a maximal path $Q$ that is $(4,5)$-colored 
	under $f_1$ and has origin at $v_3$.
	It follows from Claim \ref{cl:paths} that
	 $v_1$ is an endpoint of $Q$
	and 
	$u_2 \notin V(Q)$.
	By interchanging colors on $Q$ we obtain a proper coloring
	that we denote by $f_2$.
	Then we recolor $v_3v_4$ with $4$, make an interchange
	on the maximal $(1,2)$-colored path $u_4 u_3 u_2 u_1 v_1 v_2 v_3$
	and denote the obtained coloring by $f_3$.
	Then $u_3 u_2 y_1$ is a maximal
	$(1,5)$-colored path under $f_3$ in $G$. 
	By interchanging colors on this path
	and then recoloring $u_3u_4$ with $1$,
	we obtain a coloring where
	$u_4u_3$ and $u_1v_1$ are correct 
	but not $v_3v_4$.
	As before, $v_3v_4$ is incident to only
	one edge colored $1$. Hence, we obtain a situation which is similar 
	to one of the situations considered in  Case A.
	
	Suppose now that $f_1(v_4) = \{1,2,4,5\}$.
	Then we consider a maximal $(3,5)$-colored path 
	$Q$ with origin at $v_3$. 
	By Claim \ref{cl:paths},
	$v_2$ is an endpoint of $Q$.
	By interchanging colors on $Q$, we obtain a coloring that we denote by
	$f_2$. We now recolor $v_3v_4$ with $3$ and
	then interchange colors on
	the maximal $(1,2)$-colored
	path $u_4u_3u_2u_1v_1v_2v_3$ and denote the obtained coloring by $f_3$.
	Then
	$u_3 u_2 y_1$ is a maximal
	$(1,5)$-colored path under $f_3$. By interchanging colors on this
	path and then recoloring $u_3u_4$ with color $1$
	we obtain a situation similar to the situation considered in Case A.
\bigskip

\noindent
{\bf Case B.2.3.2.} $P$ is a cycle of length at least 8 or a path where none
of the vertices $u_4$ and $v_4$ is an endpoint
	of $P$.
	
	\noindent Since $\{1,2\}\subset f_1(v_4)$ and $\{1,2\}\subset f_1(u_4)$, the set $f_1(v_4)$
as well as the set  $f_1(u_4)$ is one of the sets $\{1,2,3,4\}$, $\{1,2,3,5\}$ and $\{1,2,4,5\}$.
Furthermore, by symmetry in (4.3), we have to consider the following six cases only: 
\begin{itemize}

 \item $f_1(v_4) = f_1(u_4) = \{1,2,3,4\}$, 

 \item $f_1(v_4) = f_1(u_4) = \{1,2,3,5\}$,

 \item  $f_1(v_4) = f_1(u_4) = \{1,2,4,5\}$, 

 \item $f_1(u_4)  = \{1,2,3,4\}$ and  $f_1(v_4)  = \{1,2,3,5\}$,

 \item $ f_1(u_4) = \{1,2,3,4\}$ and $f_1(v_4)  = \{1,2,4,5\}$, 

 \item $f_1(u_4)  = \{1,2,3,5\}$ and  $f_1(v_4)  = \{1,2,4,5\}$. \bigskip

\end{itemize}

	$f_1(v_4) = f_1(u_4) = \{1,2,3,4\}$: 	
	Recolor $v_3v_4$ and $u_3u_4$ with $5$
	and then make an interchange on the maximal $(1,2)$-colored path
	containing $u_1v_1$. Then $u_4u_3u_2 y_1$ and $v_4 v_3 v_2 x_1$
	are maximal disjoint $(1,5)$-colored paths. By interchanging
	colors on these paths we obtain a proper  
	coloring $f'$ satisfying 	(4.1).\bigskip
	
	$f_1(u_4)= f_1(v_4) = \{1,2,3,5\}$:
	Let $Q_1$ and $Q_2$ be maximal $(4,5)$-colored paths with origin
	at $u_3$ and $v_3$, respectively. 
	Then by Claim \ref{cl:paths}, 
	$u_1$ is an endpoint of $Q_1$, $v_1$ is an endpoint of $Q_2$,
	$u_2 \notin V(Q_2)$ and $v_2 \notin V(Q_1)$.
	Note also that this implies that $Q_1$ and
	$Q_2$ are disjoint.
	After interchanging colors on
	$Q_1$ and $Q_2$ we may recolor $u_3u_4$ and $v_3v_4$ with $4$.
	Next, interchange colors on the maximal $(1,2)$-colored path
	$u_3u_2u_1v_1v_2v_3$. By Claim 1, $u_2\in V(Q_1)$ and $v_2\in V(Q_2)$,
	which implies that both $u_4u_3u_2y_1$ and $v_4v_3v_2x_1$
	are maximal $(1,4)$-colored paths, and by interchanging colors
	on these paths we obtain a proper coloring $f'$ satisfying (4.1).\bigskip
	
	$f_1(u_4)= f_1(v_4) = \{1,2,4,5\}$:
	We proceed similarly as in the preceding paragraph, but instead
	consider $(3,5)$-colored paths $Q_1$ and $Q_2$
	with origin at $u_3$
	and $v_3$, respectively. 
	By Claim \ref{cl:paths},
	$Q_1$
	and $Q_2$ have endpoints $u_2$ and $v_2$, respectively.
	Note also that this implies that $Q_1$ and
	$Q_2$ are disjoint.
	Interchange colors on $Q_1$ and $Q_2$, 
	and then recolor $u_3u_4$
	and $v_3v_4$ with $3$. Next, we make an interchange 
	on the maximal $(1,2)$-colored path containing $u_1v_1$
	and denote the obtained coloring by $f_2$. 
	The paths $u_4 u_3 u_2 y_1$ and
	$v_4 v_3 v_2 x_1$ are
	maximal disjoint $(1,3)$-colored paths under $f_2$. 
	Interchange colors on these paths
	to obtain a proper coloring $f'$ satisfying (4.1).
	\bigskip

$f_1(u_4) = \{1,2,3,4\}$ and $f_1(v_4) = \{1,2,3,5\}$:
	Then we consider a maximal $(4,5)$-colored path $Q$
	under $f_1$ with origin at $v_3$.
		By Claim \ref{cl:paths},
	$v_1$ is an endpoint of $Q$,
	$v_2 \in V(Q)$ and $u_2 \notin V(Q)$.
	First  by interchanging colors on $Q$,
	and thereafter recoloring $u_3u_4$ with $5$ and $v_3v_4$ with $4$
	we obtain a coloring $f_2$. Next,
	we interchange colors on the maximal $(1,2)$-colored
	path $u_3u_2u_1v_1v_2v_3$
	and denote the obtained coloring by $f_3$.
	Then $u_4 u_3 u_2 y_1$ is a maximal $(5,1)$-colored path
	under $f_3$
	and by interchanging colors on this path
	we obtain a proper coloring 
	$f_4$ under which
	$u_4u_3$ and $u_1v_1$ are correct, but not $v_3v_4$.
	However,  $v_3v_4$ is adjacent to only one edge colored $1$,
	so we have obtained a situation which is similar
	to the one considered in Case A.	
	%the coloring $f'$ satisfies the condition (4.1), because 
%$M(f',1)\cap M(h,1)=(M(f,1)\cap M(h,1)\setminus \{v_3v_4\})
%\cup \{v_1u_1, u_3u_4\}$.
\bigskip

	$f_1(u_4) = \{1,2,3,4\}$ and $f_1(v_4) = \{1,2,4,5\}$:
	Then we consider a maximal path $Q$ that is $(3,5)$-colored under
	$f_1$ and has origin at $v_3$. 
	By Claim \ref{cl:paths},
	$v_2$ is an endpoint of $Q$.
	We now  interchange colors
	on $Q$, then recolor
	$u_3u_4$ and $v_3v_4$ with $5$ and $3$, respectively,
	and denote the obtained coloring by $f_2$. 
	Thereafter make  an interchange on the maximal
	$(1,2)$-colored path $u_3u_2u_1v_1v_2v_3$.
	Then $u_4 u_3 u_2 y_1$ is a maximal $(5,1)$-colored path
	under the new coloring $f_3$. By interchanging colors on this path
	we obtain a proper coloring under which
	$u_4u_3$ and $u_1v_1$ are correct, but not $v_3v_4$.
		However,  $v_3v_4$ is adjacent to only one edge colored $1$,
	so we have obtained a situation which is similar
	to the one considered in Case A.	

\bigskip

	$f_1(u_4)=  \{1,2,3,5\}$ and $f_1(v_4) = \{1,2,4,5\}$:
	Let $Q_1$ be a maximal $(4,5)$-colored path with origin
	at $u_3$. Then, by Claim \ref{cl:paths},
	$u_1$ is an endpoint
	of this path, and $u_2  \in V(Q_1)$ and $v_2 \notin V(Q_1)$.
	Interchange colors on $Q_1$ and denote the obtained coloring
	by $f_2$.
	Next, we consider a maximal $(3,5)$-colored path $Q_2$ 
	with origin at $v_3$. Similarly as in proof of Claim 1, it can be shown
	that $v_2$ is an endpoint of $Q_2$.
	Interchange colors on $Q_2$
	and denote the obtained coloring by $f_3$.
	We recolor $v_3v_4$ and $u_3 u_4$ with colors $3$ and $4$,
	respectively, and thereafter interchange colors on the maximal
	$(1,2)$-colored path $u_3 u_2 u_1v_1 v_2 v_3$. Denote this new coloring by $f_4$.
	Then $v_4 v_3 v_2 x_1$ is a maximal $(1,3)$-colored path under $f_4$.
By  interchanging the colors on this path
	  we  obtain a situation which is similar to the  situation considered in
	 Case A. \bigskip

We showed that for all possible $P$ we can transform $f_1$  by interchanges 
either directly to a proper coloring $f'$ satisfying (4.1), or to another proper coloring $\varphi$ where we have  a situation which is similar to one of situations considered in 
Case A and Case B.1.  In both cases we can reach  a proper  coloring $f'$ satisfying \eqref{E1} by a sequence of interchanges.
Since $f_1$ was obtained from $f$  also by a sequence of interchanges, we can obtain $f'$ from $f$ by interchanges too. This lead to a contradiction with our assumption 
 that  it is impossible, using only interchanges, to transform  $f$  to a
proper 5-edge coloring $f'$ satisfying \eqref{E1}.
\end{proof}	
\bigskip

\begin{proof}
[{\bf Proof of Theorem	\ref{th:main}}]
	Let $G$ be a graph with $\chi'(G) = \Delta(G)=4$. Furthermore,
	let $h$ be a proper $4$-edge coloring of $G$ and $f$ be a proper 
	$t$-edge coloring of $G$, where $t\geq 5$. We will show that  
	$f$ can be transformed to $h$ by
	a sequence of interchanges.
	
	If $t > 5$,
	then by  Vizing's algorithm $f$ can
	be transformed into a proper $5$-edge coloring
	using only interchanges (see e.g. \cite{West});
	the original problem now is reduced
	to the problem of transforming the obtained proper 5-edge coloring to $h$.
	Thus we may assume that $t = 5$.

Let $m$ denote the minimum degree of $G$, $1\leq m\leq 4$. We define graphs 
$G_0,G_1,...,G_{4-m}$ and for each $G_i$ two  proper colorings $f_i$ and $h_i$ in the following way:

 1) $G_0=G$ and $f_0=f$, $h_0=h$.

2) Suppose that $G_i, f_i$ and $h_i$ are already defined 
for some $i\geq 0$. If $i=4-m$, then stop. Otherwise take
two disjoint copies  $G_i'$
and $G_i''$ of $G_i$,  with $V(G_i')=\{x':x\in V(G_i)\}$ and $V(G_i'')=\{x'':x\in
V(G_i)\}$. Then we define  $G_{i+1}$ as the graph
obtained from $G_i'$ and $G_i''$ by joining $x'$ and $x''$ with
an edge, for each vertex $x\in V(G_i)$ with degree less than 4.
The coloring $f_i$ of $G_i$ induces a proper $5$-edge coloring $f_{i+1}$ of $G_{i+1}$
in the following way: we color the copies $G_i'$ and $G_i''$ in the same
way as $G_i$, and then color the  edge joining $x'$ to
$x''$ with a color  from $\{1,2,3,4\}$ which is not used to
color an edge incident with $x$ in $G_i$. Similarly, the coloring $h_i$ of $G_i$
induces a proper $4$-edge coloring $h_{i+1}$ on $G_{i+1}$. 

Clearly, the graph $G_{4-m}$ is a 4-regular graph. Furthermore, $h_i$ is a proper 4-edge coloring and  
$f_{i}$ is a proper 5-edge coloring of $G_i$, for $i=0,1,\dots,4-m$.
Then, by Theorem 4.1,   the proper 5-edge coloring $f_{4-m}$ of the graph $G_{4-m}$ can be transformed into the proper 4-edge coloring $h_{4-m}$ of $G_{4-m}$ by a
sequence of interchanges. 
It is clear that these interchanges
 also define a sequence  of  proper colorings of $G$,
beginning with $f$ and ending with $h$, such that each  intermediate coloring   in this sequence   either is the same as  the previous coloring or obtained from the previous coloring   by either one interchange or a sequence of
interchanges.

This completes the proof of Theorem \ref{th:main}.
\end{proof}

\section{Proof of Theorem \ref{th:main3}}

In this section we prove Theorem \ref{th:main3}.

\begin{proof}
[{\bf Proof of Theorem \ref{th:main3}}]
Every graph $G$ with $\Delta(G)\geq 5$ where $G_{\geq 5}$ is acyclic  is a Class 1 graph because the graph $G_{\Delta}$ is  acyclic.

Let $\cal A$ denote the set of graphs $G$ with $\Delta(G)\geq 5$ where $G_{\geq 5}$ is acyclic and all Class 1 graphs with maximum degree 4.
  We will show that for every $G\in \cal A$ all proper
$(\Delta(G)+1)$-edge colorings are Kempe equivalent.
The proof is by induction on the maximum degree $\Delta(G)$. 

 By Corollary 1.4, the proposition is true  for all graphs in $\cal A$  with maximum degree 4. Suppose that the proposition  is true for all graphs in $\cal A$ with maximum degree  less than $k$, 
 $k\geq 5$, and let $G\in \cal A$  be a graph with $\Delta(G)=k$. Then 
  $\chi'(G)=k$.
Consider  a proper $k$-edge coloring $h$ of $G$. As in the proof of Theorem \ref{th:connect}
 we can assume that $M(h,k)$ is a maximal matching of $G$. 

In order to show that any two proper $(k+1)$-edge colorings of $G$ are Kempe equivalent
 it is sufficient to show that any proper $(k+1)$-edge coloring  of $G$ can be transformed to $h$
 by a sequence of interchanges. 
Let $\varphi$ be an arbitrary proper $(k+1)$-edge coloring of $G$. By Proposition \ref{th:main2}, $\varphi$
can be   transformed  by a sequence of interchanges to a proper $k$-edge coloring  $\varphi_1$.
Then the coloring $\varphi_1$ can be transformed  to a proper
$(k+1)$-edge coloring  by sequentially recoloring the edges in the matching  $M(h,k)$  with color $k+1$. Clearly,  the obtained 
proper $(k+1)$-edge coloring $f$ satisfies the condition $M(h,k)=M(f,k+1)$. Furthermore,
$f$ is obtained from $\varphi$ by a sequence of interchanges.

Consider the graph $G_1=G-M(h,k)$.  
	The proper $k$-edge coloring $h$ of $G$ induces a proper 
	$(k-1)$-edge  coloring $h'$ of $G_1$ and the proper 
	$(k+1)$-edge 	coloring $f$ of $G$ induces a proper 
	$k$-edge  coloring $f'$ of $G_1$. Clearly, $\chi'(G_1)=k-1$.
	Therefore, $h'$ is a proper $\chi'(G_1)$-edge coloring and $f'$ is a proper
	$(\chi'(G_1)+1)$-edge coloring of $G_1$.

Moreover, $\Delta(G_1)=k-1$, because $G$ is a Class 1 graph and $k=\chi'(G)$. 
Furthermore, $G_1\in \cal A$. Then, by the induction hypothesis, $f'$ and $h'$ are Kempe equivalent. This implies that  the coloring $f$  of $G$ can be transformed to the coloring $h$ by a sequence of interchanges.

Thus for any graph $G\in \cal A$ with $\Delta(G)=k$ all proper
$(k+1)$-edge colorings are Kempe equivalent.
The proposition follows by the principle of induction.
\end{proof}

\end{document}